\newtheorem{theo}{Theorem}
\newtheorem{prop}{Proposition}
\newtheorem{lem}{Lemma}
\newtheorem{cor}{Corollary}
\newtheorem{rem}{Remark}
\newtheorem{acknowledgement}{Acknowledgement}
\begin{document}

\title{Automorphisms of certain Niemeier lattices\\ and \\Elliptic Fibrations 
 }

\author{Marie Jos\'e Bertin and Odile Lecacheux}

\keywords{ K3 surfaces, Niemeier lattices, Elliptic fibrations}

\email{marie-jose.bertin@imj-prg.fr\\
odile.lecacheux@imj-prg.fr}
\date{\today}

\curraddr{Universit\'e Pierre et Marie Curie (Paris 6), Institut
de Math\'ematiques, 4 Place Jussieu, 75005 PARIS, France}

\begin{abstract}
Nishiyama introduced a lattice theoretic classification of the elliptic fibrations on a $K3$ surface. In a previous paper we used his method to exhibit $52$ elliptic fibrations, up to isomorphisms, of the singular $K3$ surface of discriminant $-12$. We prove here that the list is complete with a $53$th fibration, thanks to a remark of Elkies and Sch\"{u}tt. We characterize the fibration both theoretically and with a Weierstrass model.

\end{abstract}

\maketitle
\section{Introduction}

In a previous paper \cite{BGL}, the authors gave a classification, up to automorphisms, of the elliptic fibrations on the singular $K3$ surface $X$ whose transcendental lattice is isometric to $\langle 6 \rangle \oplus \langle 2 \rangle$. This classification was derived from the Kneser-Nishiyama method. Each elliptic fibration was given with the Dynkin diagrams characterizing its reducible fibers, the rank and torsion of its Mordell-Weil group. Hence $52$ elliptic fibrations were obtained.

Later on, Elkies and Sch\"{u}tt informed us that we missed an elliptic fibration.
More precisely, Elkies said  how he discovered the lack \cite{El}: ``while tabulating some information about the lattices in this genus (positive-definite even lattice of rank $18$ and discriminant $12$)...I had already done the smaller discriminants), including the sizes of their automorphism groups, and calculated their total mass (=sum of $1/|\text{Aut}(G)|$) which added up to less than the prediction of the mass formula. The discrepancy was a fraction $1/N$ so I guessed that just one lattice, with $N$ automorphisms, was missing, and eventually figured out where I lost the $53$rd lattice.''
 This paper intends to complete the gap.

Let us recall briefly the context. Given $\mathcal E$ an elliptic fibration on $X$, we define its trivial lattice by ${\text{T}}(\mathcal{E}):=U\oplus (W_{\mathcal{E}})_{\text{root}}$ where $W_{\mathcal{E}}$ denotes its frame lattice, that is the orthogonal complement of $U$ in the Neron-Severi group $NS(X)$. The Mordell-Weil group of $\mathcal E$ is encoded in the frame
\begin{equation}\label{F:1}
MW(\mathcal E)=W_{\mathcal E}/(W_{\mathcal E})_{\text{root}}.
\end{equation}

Thus 
\begin{equation}\label{F:2}
\text{rk}(MW(\mathcal E))=\text{rk}(W_{\mathcal E})-\text{rk}(W_{\mathcal E})_{\text{root}} \,\,\,\,\,\,\,\,(MW(\mathcal E))_{\text{tors}}=\overline{(W_{\mathcal E})_{\text{root}}}/(W_{\mathcal E})_{\text{root}}.
\end{equation}

The Kneser-Nishiyama's method provides a determination of the frame. Starting from the transcendental lattice of $X$
\[
T_X=
\begin{pmatrix}
  6  &  0 \\
  0  &  2
\end{pmatrix},
\]
denote $T$ the root lattice $T=A_5\oplus A_1$, orthogonal complement of $T_X(-1)$ in the root lattice $E_8$.
Take a Niemeier lattice $L$, that is a unimodular lattice of rank $24$, with root lattice $L_{\text{root}}$, often written $L=N(L_{\text{root}})$. Consider a primitive embedding $\phi: T \hookrightarrow L$. The orthogonal complement of $\phi(T)$ in $L$ is the frame of an elliptic fibration on $X$ and since $T$ is a root lattice \cite{BGL}, it suffices to consider all the primitive embeddings of $T$ in $L_{\text{root}}$ to obtain all the elliptic fibrations on $X$. Denote

\[ W=(\phi(A_5 \oplus A_1))^{\perp _{L}} \qquad \text{and} \qquad  N=(\phi(A_5 \oplus A_1))^{\perp _{L_{\text{root}}}}\]

and observe that $W_{\text{root}}=N_{\text{root}}$. Moreover the trivial lattice of the elliptic fibration provided by $\phi$ satisfies $T(\mathcal E)=U \oplus W_{\text{root}}$ and we can apply formulae (\ref{F:1}) and (\ref{F:2}).

Now given two points $P$ and $Q$ of the Mordell-Weil group, we can define a height pairing. The Mordell-Weil group, up to its torsion subgroup, equipped with this height pairing, is the Mordell-Weil lattice $MWL(X)$ which satisfies
\[ MWL(X)=W/\overline{W_{\text{root}}}.\]
Thus we recover more than the rank and torsion but also torsion and infinite sections of the elliptic fibration.

To list all the primitive embeddings of $A_5 \oplus A_1$ in the various Niemeier lattices, the authors of \cite{BGL} used Nishiyama's tables \cite{Nis} p.309 and p.323. They noticed two primitive embeddings of $A_5$ in $D_6$, not isomorphic by the Weyl group of $D_6$, namely
\[i_1(A_5)=(d_5,d_4,d_3,d_2,d_1) \qquad \text{and} \qquad i_2(A_5)=(d_6,d_4,d_3,d_2,d_1)\]
but p.323, Nishiyama missed the orthogonal complement in $D_6$ of $i_1(A_5)$. That is the origin of the gap which concerns the primitive embeddings of $A_5 \oplus A_1$ in $L=N(D_6^4)$ and $L=N(A_9^2D_6)$.

The paper is divided in two parts. In the first part we prove that the two primitive embeddings of $A_5$ in $D_6$ give two primitive embeddings of $A_5\oplus A_1$ in $N(D_6^4)$ isomorphic by an element of $\text{Aut} (N(D_6^4))$ so lead to just one elliptic fibration up to isomorphism. On the contrary, these embeddings $i_1$ and $i_2$ give rise to two non isomorphic primitive embeddings in $N(A_9^2D_6)$ thus exactly to two elliptic fibrations and not only one as listed in \cite{BGL}. Hence we obtain the $53$th fibration denoted by $\#40$ bis. We also explain the determination of the Mordell-Weil lattices.

In the second part we show how to derive the corresponding elliptic fibrations from the fibration $\#50$ of \cite{BGL} with Weierstrass equation (\ref{Eq:Eu}) and its associated graph. We set also the correspondence betweeen the results found in the first part of the paper and those coming from the graph.

\begin{acknowledgement} We are grateful to N. Elkies and M. Sch\"{u}tt for pointing out a missing fibration in the classification \cite{BGL}.
\end{acknowledgement}

\section{Some facts concerning Niemeier lattices and their automorphisms}

Concerning the definitions and properties of the irreducible root lattices $A_n$, $D_n$, $E_n$ and their dual lattices we refer to \cite{BGL} or \cite{BL} and use Bourbaki's notations, as in the Dynkin diagram of $D_6$ (see section 3).

Let $L$ a Niemeier lattice i.e. a unimodular lattice of rank $24$. We define its root lattice $L_{\text{root}}=\{\alpha \in L / <\alpha,\alpha>=-2\}$ where $<.,.>$ denotes the $\mathbb Z$-bilinear form on $L$. We recall that a Niemeier lattice $L$ is, up to an isomorphism, entirely determined by its root lattice $L_{\text{root}}$; thus it is denoted $L=N(L_{\text{root}})$. It can be realized as a sublattice of the dual lattice $(L_{\text{root}})^*$ of $L_{\text{root}}$. Thus $N(L_{\text{root}})/L_{\text{root}}$ is a finite abelian group, called the ``glue code'' or the set of ``glue vectors''. Writing $L_{\text{root}}=L_1\oplus L_2 ...\oplus L_k$ where the $L_i$ are irreducible root lattices of type $A_n$, $D_n$ or $E_n$, a typical glue vector of $L$ can be written \cite{CS}, 
\begin{equation}\label{E:1}
z=[y_1,y_2, ... ,y_k]
\end{equation}

where $y_i$ is a member of the dual lattice $L_i^*$. Any $y_i$ can be altered by adding a vector of $L_i$ so we may suppose that $y_i$ belongs to a standard system of representatives for the cosets of $L_i$ in $L_i^*$. It is usual to choose the glue vectors to be of minimal length in their cosets.

The various vectors $z$ of (\ref{E:1}) must have integral inner products with each other and be closed under addition modulo $L_1\oplus ... \oplus L_k$. This process is called ``gluing'' the components $L_1$, ... $L_k$.  

\subsection{The automorphism group $\text{Aut}(L_{\text{root}})$}
In the sequel we denote $X \rtimes Y$ a split extension of a group $Y$ by a group $X$.
We recall that
\[\text{Aut}(L_m)=W(L_m)\rtimes G_1(L_m)\]
where $W(L_m)$ is the Weyl group of $L_m$ and $G_1(L_m)$ the subgroup of $\text{Aut}(L_m)$ consisting of all Dynkin diagram automorphisms of $L_m$.

Set $G_0(L_{\text{root}}):=\prod_{m=1}^{k}W(L_m),\,\,\,\,\,\,\,\,G_1(L_{\text{root}}):=\prod_{m=1}^{k}G_1(L_m)$ and $K(L_{\text{root}})$ the following subgroup of  $\text{Aut}(L_{\text{root}})$
\[K(L_{\text{root}}):=\{\tau \in  \text{Aut}(L_{\text{root}}) / \tau(L_m)=L_m \,\,\,\forall\,\, m,\,\,\, 1\leq m \leq k \}.\]
The group  $G_0(L_{\text{root}})$ is called the Weyl group of $L_{\text{root}}$ and is a normal subgroup of $K(L_{\text{root}})$. The group  $G_1(L_{\text{root}})$ is a subgroup of $K(L_{\text{root}})$ and we have the relation
\[K(L_{\text{root}})=\prod_{m=1}^k \text{Aut}(L_m)=G_0(L_{\text{root}}) \rtimes G_1(L_{\text{root}}).\]
For each $1\leq i<j\leq k$ such that $L_i \simeq L_j$, denote $t_{ij}$ the transposition between the entries $i$ and $j$ and set 
\[G_2(L_{\text{root}}):=\langle t_{ij} / 1\leq i<j\leq k \,\,\,\,\,L_i\simeq L_j \rangle\]
the subgroup of $\text{Aut}(L_{\text{root}})$ of all permutations of the concerned entries. Finally we get 
\[\text{Aut}(L_{\text{root}})=K(L_{\text{root}})\rtimes G_2(L_{\text{root}})=(G_0(L_{\text{root}})\rtimes G_1(L_{\text{root}}))\rtimes G_2(L_{\text{root}}).\]

\subsection{The automorphism group $\text{Aut}(L)$}
Since the spanning set $\Delta=\{\alpha \in L /\,\,\,\langle\alpha, \alpha \rangle =-2 \}$ of $L_{\text{root}}$ is stable under the action of $\text{Aut}(L)$, it follows that $L_{\text{root}}$ is stable under $\text{Aut}(L)$ and we get a group homomorphism
\[
\begin{matrix}
\text{Aut}(L) & \rightarrow & \text{Aut}(L_{\text{root}})\\
     \tau & \mapsto & \tau |_{L_{\text{root}}}
\end{matrix}.
\]

Set $G_0(L):=G_0(L_{\text{root}})$ ; it is a normal subgroup of $\text{Aut}(L)$. Define the subgroup of $\text{Aut}(L)$,  $G_1(L):=\text{Aut}(L) \cap G_1(L_{\text{root}})$. They satisfy the relation
\[K(L_{\text{root}})\cap \text{Aut}(L)=G_0(L) \rtimes G_1(L).\]
Defining the subgroup $H(L)$ of $\text{Aut}(L)$ by $H(L):=\text{Aut}(L) \cap (G_1(L_{\text{root}}) \rtimes G_2(L_{\text{root}}))$, it follows $\text{Aut}(L) =G_0(L) \rtimes H(L)$.
Define the subgroup $G_2(L)$ of $G_2(L_{\text{root}})$ by
\[G_2(L):=\{\tau \in G_2(L_{\text{root}}) / \tau_1 \tau \in H(L) \,\,\,\, \text{for some   } \tau_1 \in G_1(L_{\text{root}})\}.\]
From this definition we get a surjective homomorphism $\pi_2$
\[
\begin{matrix}
\pi_2: & H(L) &  \rightarrow & G_2(L) \\
       & \tau & \mapsto & \tau_2
\end{matrix}
\]
and the exact sequence
\begin{equation}\label{Eq:suite}
 1 \rightarrow G_1(L) \rightarrow H(L) \rightarrow G_2(L) \rightarrow 1.
\end{equation}

Because $\text{Aut}(L)$ is a subgroup of $\text{Aut}(L_{\text{root}})$, we get the induced action of $\text{Aut}(L)$ on the ``glue code'' $L/L_{\text{root}}$. Moreover this action is the identity if and only if the element $\tau$ of $\text{Aut}(L)$ belongs to $G_0(L)$.
Finally we observe that $H(L)$ is identical to the subgroup of $G_0(L_{\text{root}})\rtimes G_1(L_{\text{root}})$ consisting of the elements preserving the ``glue code''.

For more details explaining how $\text{Aut}(L)$ is obtained from $\text{Aut}(L_{\text{root}})$ and how we can construct an automorphism of $L$, we refer to \cite{IS1} and \cite{IS2}.

\section{The Niemeier lattice $N(D_6^4)$}
Recall first the glue vectors of $D_6$. They are denoted $[0]$, $[1]$, $[2]$, $[3]$ by Conway and Sloane \cite{CS} and $\delta_6$, $\bar{\delta_6}$, $\tilde{\delta_6}$ in \cite{BGL} with the following correspondance

\begin{minipage}[t]{0.645\textwidth}
\begin{center}
$\begin{matrix}
[1] & =\delta_6  &=\frac{1}{2} (d_1+2d_2+3d_3+4d_4+2d_5+3d_6)\\
[2] & =\bar{\delta_6}  &= d_1+d_2+d_3+d_4+\frac{1}{2}(d_5+d_6)\\
[3] & =\tilde{\delta_6}  &=\frac{1}{2} (d_1+2d_2+3d_3+4d_4+3d_5+2d_6),
\end{matrix}$
\end{center}
and satisfy $[1]+[3]=[2]$.
\end{minipage}
\begin{minipage}[t]{0.345\textwidth}
\begin{center}
\begin{tikzpicture}[scale=0.3]%
\draw[fill=black](0.1,1.7039)circle (0.0865cm)node [below] {$d_6$};
\draw[fill=black](2.9,1.7039)circle (0.0865cm)node [below] {$d_{4}$};
\draw[fill=black](5.2,1.7039)circle (0.0865cm)node [below] {$d_{3}$};
\draw[fill=black](9.9,1.7039)circle (0.0865cm)node [below]{$d_1$};
\draw[fill=black](7.6,1.7039)circle (0.0865cm)node [below]{$d_2$};
\draw[fill=black](2.9,3.10)circle (0.0865cm)node [above]{$d_{5}$};
\draw[thick](0.1,1.7039)--(9.9,1.7039);
\draw[thick](2.9,1.7039)--(2.9,3.10);
\end{tikzpicture}
\end{center}
\end{minipage}

Also $\text{Aut}(D_6)=W(D_6)\rtimes G_1(D_6)$ with $G_1(D_6)\simeq \mathbb Z/2\mathbb Z$ which interchanges the glue vectors $[1]$ and $[3]$.

Moreover
\[N(D_6^4)=\mathbb Z\{D_6\oplus D_6 \oplus D_6 \oplus D_6, \text{glue code} \}.\]
The glue code, i.e. the set of glue vectors is given by all the even permutations of $[0,1,2,3]$ where $i$ denotes, by abuse of notation, the glue vector $[i]$. Thus $\mathcal A_4$ is contained in $\text{Aut}(N(D_6^4))$. More explicitly the glue code is

\begin{equation}\label{glue:G}
\begin{matrix}
[0,0,0,0], & [0,1,2,3], & [0,3,1,2], & [0,2,3,1], \\
[1,1,1,1], & [1,0,3,2], & [1,3,2,0], & [1,2,0,3], \\
[2,2,2,2], & [2,0,1,3], & [2,3,0,1], & [2,1,3,0],\\
[3,3,3,3], & [3,0,2,1], & [3,1,0,2], & [3,2,1,0].
\end{matrix}
\end{equation}

\begin{lem}
Up to an isomorphism of the Weyl group $W(D_6)$, there are two primitive embeddings of $A_5$ in $D_6$, namely
\[i_1(A_5)=(d_5,d_4,d_3,d_2,d_1)\]
\[i_2(A_5)=(d_6,d_4,d_3,d_2,d_1).\]
These two embeddings are interchanged by the element $g\in G_1(D_6)$ interchanging $d_5$ and $d_6$. Moreover $g$ acts on the glue vectors of $D_6$:
\[ g([1])=[3],\,\,\,\,\,g([2])=[2],\,\,\,\,\,g([3])=[1].\]
\end{lem}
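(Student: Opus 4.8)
The plan is to work directly with Bourbaki's coordinate description of $D_6$, realizing it as the set of integer vectors $(x_1,\dots,x_6)$ with even coordinate sum, with simple roots $d_i = e_i - e_{i+1}$ for $1\le i\le 5$ and $d_6 = e_5 + e_6$ (matching the Dynkin diagram drawn above, where $d_5$ and $d_6$ are the two fork tips attached to $d_4$). The embeddings $i_1$ and $i_2$ send the ordered $A_5$-chain to $(d_5,d_4,d_3,d_2,d_1)$ and $(d_6,d_4,d_3,d_2,d_1)$ respectively; the key structural point is that these two chains differ only in whether the last fork node is $d_5$ or $d_6$. First I would verify that each tuple really is an $A_5$ chain: consecutive listed roots must have inner product $\langle\cdot,\cdot\rangle=+1$ (with the convention $\langle d_i,d_i\rangle=-2$, so the Gram matrix is minus the usual Cartan matrix) and non-consecutive ones orthogonal. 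This is the routine check that the sub-Dynkin diagram obtained by deleting one fork node from $D_6$ is an $A_5$ line, and I would record that both deletions give the same abstract diagram, which is why both embeddings exist.

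Next I would show \emph{primitivity}: the sublattice generated by each chain must be primitively embedded, i.e. its saturation in $D_6$ equals itself. The cleanest way is to exhibit the orthogonal complement and compare discriminants, or to argue that an $A_5$ spanned by simple roots of $D_6$ is automatically primitive because any vector in $D_6$ whose projection lands in $A_5\otimes\mathbb Q$ already lies in the $\mathbb Z$-span of those roots — this follows from the root lattice being generated by the $d_i$ and the deleted node contributing an independent direction. I would then address the \emph{classification up to $W(D_6)$}: there are exactly two $W(D_6)$-orbits. The conceptual reason is that $W(D_6)$ acts transitively on each type of configuration, but the two fork tips $d_5,d_6$ are \emph{not} exchanged by $W(D_6)$ (the Weyl group of $D_n$ for even $n$ does not swap the two fork nodes — that swap is precisely the outer diagram automorphism), so the two chains lie in distinct orbits. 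I expect this orbit-counting step to be the main obstacle, since one must rule out any other $W(D_6)$-orbit of primitively-embedded $A_5$'s; I would handle it by invoking the standard fact that, up to the Weyl group, a primitive $A_5\subset D_6$ is determined by which node of the Dynkin diagram is deleted, and the three ``interior'' deletions do not yield an $A_5$ chain, leaving only the two fork-tip choices.

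Finally I would produce the diagram automorphism $g$ and compute its action on the glue vectors. Since $i_1$ and $i_2$ are carried to one another by swapping $d_5\leftrightarrow d_6$, and this swap is exactly the generator of $G_1(D_6)\simeq\mathbb Z/2\mathbb Z$ (the nontrivial element of the order-two outer automorphism group of the $D_6$ diagram), the element $g$ fixing $d_1,d_2,d_3,d_4$ and interchanging $d_5,d_6$ is an automorphism of $D_6$ carrying $i_1(A_5)$ to $i_2(A_5)$. In coordinates this $g$ is the reflection negating the last coordinate, $e_6\mapsto -e_6$, composed with the identity on $e_1,\dots,e_5$. To finish, I would substitute $e_6\mapsto -e_6$ into the explicit expressions for the glue vectors recorded above: writing $d_5=e_5-e_6$ and $d_6=e_5+e_6$, so that $g$ swaps $d_5$ and $d_6$, one reads off from the formulas for $\delta_6,\bar\delta_6,\tilde\delta_6$ that $[1]=\delta_6$ and $[3]=\tilde\delta_6$ differ only by the interchange $d_5\leftrightarrow d_6$ while $[2]=\bar\delta_6$ is symmetric in $d_5,d_6$. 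Hence $g([1])=[3]$, $g([3])=[1]$, and $g([2])=[2]$, which is the asserted action. This last part is a short direct computation once the identification of $g$ with the coordinate sign change is in hand.
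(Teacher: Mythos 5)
Your plan is correct and, in substance, it is exactly the verification the paper compresses into the single line ``it follows straightforward from the definitions'': realize $D_6$ in Bourbaki coordinates, note that deleting either fork tip leaves an $A_5$ chain, observe primitivity because each chain is part of a $\mathbb Z$-basis of $D_6$, and read off the action of $d_5\leftrightarrow d_6$ on the explicit expressions for $\delta_6,\bar\delta_6,\tilde\delta_6$. All of those computations check out (in coordinates $[1]=(\tfrac12,\dots,\tfrac12)$, $[3]=(\tfrac12,\dots,\tfrac12,-\tfrac12)$, $[2]\equiv e_1$, and $g=(e_6\mapsto -e_6)$ visibly swaps the first two and fixes the third).

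The one step you should tighten is the claim that the two chains lie in distinct $W(D_6)$-orbits. As written, ``the swap $d_5\leftrightarrow d_6$ is the outer automorphism, hence not in $W(D_6)$'' only shows that this particular map is not a Weyl element; it does not by itself exclude some other $w\in W(D_6)$ carrying $i_1(A_5)$ onto $i_2(A_5)$. The clean fix is available in the coordinates you already set up: such a $w$ would have to send the orthogonal complement generator $e_1+\cdots+e_6$ of $i_1(A_5)$ to $\pm(e_1+\cdots+e_5-e_6)$, which is impossible because elements of $W(D_6)$ are signed permutations with an \emph{even} number of sign changes, while both target vectors have an odd number of negative entries. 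Similarly, the completeness assertion (no further orbits of primitive $A_5\subset D_6$) is not something you can get purely from ``which node is deleted''; it is the content of Nishiyama's classification, which the paper cites, so invoking it is legitimate but should be attributed rather than presented as self-evident. With those two adjustments your argument is a complete proof, and indeed more of a proof than the paper supplies.
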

\begin{proof}
It follows straightforward from the definitions.
\end{proof}

\begin{theo}
Let $y$ be any glue vector of $N(D_6^4)$, $y=[a,b,c,d]$. Define the application of $g$ on the glue code as $g(y)=[g(a),g(b),g(c),g(d)]$. Denote by $\tau$ any transposition of two components. Then $\tau \circ g \in \text{Aut}(N(D_6^4))$.
\end{theo}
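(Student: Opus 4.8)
The plan is to reduce the statement to a purely combinatorial check on the glue code, using the description of $\text{Aut}(N(D_6^4))$ from Section~2. Since $g$ is the nontrivial Dynkin diagram automorphism of each $D_6$-factor, the map sending $[a,b,c,d]$ to $[g(a),g(b),g(c),g(d)]$ is the element $(g,g,g,g)$ of $G_1(D_6)^4=G_1(L_{\text{root}})$, while the transposition $\tau$ of two components lies in $G_2(L_{\text{root}})$; hence $\tau\circ g\in G_1(L_{\text{root}})\rtimes G_2(L_{\text{root}})$. Recall from Section~2 that an element of $G_1(L_{\text{root}})\rtimes G_2(L_{\text{root}})$ belongs to $\text{Aut}(L)$ precisely when it preserves the glue code $L/L_{\text{root}}$ (this is the content of $\text{Aut}(L)=G_0(L)\rtimes H(L)$, $G_0(L)$ acting trivially on the glue code). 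So it suffices to prove that $\tau\circ g$ maps the glue code (\ref{glue:G}) onto itself.

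First I would record the action of $g$ at the level of symbols. By the preceding lemma, $g$ fixes $[0]$ and $[2]$ and interchanges $[1]$ and $[3]$, so $g$ acts on $\{0,1,2,3\}$ as the transposition $\gamma=(1\,3)$, applied entrywise to each glue vector. I would then split the sixteen glue vectors into the four constant vectors $[i,i,i,i]$ and the twelve vectors with pairwise distinct entries, which are exactly the even permutations of $(0,1,2,3)$ appearing in (\ref{glue:G}).

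The key step is a parity count. Viewing a glue vector with distinct entries as a permutation $\pi$ of $\{0,1,2,3\}$, the map $g$ replaces $\pi$ by $\gamma\circ\pi$ (post-composition with the transposition $\gamma$), reversing parity, while $\tau$ replaces $\pi$ by $\pi\circ\tau'$ (pre-composition with the position transposition $\tau'$), again reversing parity. The two sign changes cancel, so $\tau\circ g$ preserves parity and carries bijections to bijections; it therefore permutes the twelve even-permutation vectors among themselves. On a constant vector one has $g([i,i,i,i])=[\gamma(i),\gamma(i),\gamma(i),\gamma(i)]$, still constant, and $\tau$ fixes every constant vector, so the four constant vectors are permuted among themselves as well. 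Consequently $\tau\circ g$ maps the glue code bijectively onto itself, and the membership $\tau\circ g\in\text{Aut}(N(D_6^4))$ follows from the glue-code criterion.

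The hard part is lining up the two parity reversals correctly: $g$ acts on the \emph{values} of a glue vector (one transposition $\gamma$ in each coordinate, an odd post-composition), whereas $\tau$ acts on the \emph{positions} (an odd pre-composition), and the whole argument hinges on these two odd factors cancelling. As a safeguard I would also give the alternative verification by linearity: both $g$ and $\tau$ are automorphisms of the discriminant group $(L_{\text{root}}^*/L_{\text{root}})$, so $\tau\circ g$ preserves the glue code as soon as it does so on the four generators $[1,1,1,1]$, $[2,2,2,2]$, $[0,1,2,3]$, $[0,2,3,1]$, which is a short direct computation.
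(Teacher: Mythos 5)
Your proof is correct, and it takes a genuinely different route from the paper's. The paper fixes $\tau$ to be the transposition of the last two components, notes that $\tau$ and $g$ commute so that $\tau\circ g$ is an involution, and then simply tabulates the image of every glue vector, pairing the sixteen elements of (\ref{glue:G}) to exhibit the bijection explicitly; the case of an arbitrary transposition is then asserted to be analogous. You instead observe that the non-constant glue vectors are exactly the even permutations of $(0,1,2,3)$, that $g$ acts by post-composition with the value transposition $(1\,3)$ while $\tau$ acts by pre-composition with a position transposition, and that the two parity reversals cancel, so $\tau\circ g$ permutes the twelve even-permutation vectors (and obviously the four constant vectors) among themselves. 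Both arguments then invoke the same criterion from Section~2, namely that an element of $G_1(L_{\text{root}})\rtimes G_2(L_{\text{root}})$ preserving the glue code lies in $\text{Aut}(L)$. What your approach buys is uniformity and an explanation: it treats all six transpositions $\tau$ at once (closing the ``same conclusion for arbitrary $\tau$'' step that the paper leaves to the reader) and makes transparent \emph{why} the composite preserves the code, whereas the paper's table is a concrete verification that also serves as input for the subsequent remark describing the action in detail. Your fallback check on the four generators $[1,1,1,1]$, $[2,2,2,2]$, $[0,1,2,3]$, $[0,2,3,1]$ is also valid, since these do generate the order-$16$ code and $\tau\circ g$ induces a group automorphism of the discriminant group.
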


\begin{proof}
Consider any permutation of two elements, for example take for $\tau$ the transposition of the two last components.
Observe first that $\tau$ and $g$ commute; it follows $(\tau \circ g)^2 =\text{Id}$. This allows us to present the action of $\tau \circ g$ on the glue code as below:
\[
\begin{matrix}
[0,0,0,0] & [0,1,2,3] & [0,2,3,1] & [1,1,1,1] & [1,3,2,0] \\
\tau \circ g \updownarrow & \updownarrow & \updownarrow & \updownarrow & \updownarrow \\
[0,0,0,0] & [0,3,1,2] & [0,2,3,1] & [3,3,3,3] & [3,1,0,2] \\

\bigskip\\

[2,2,2,2] & [2,0,1,3] & [2,1,3,0] & [1,0,3,2] & [1,2,0,3] \\
\tau \circ g \updownarrow & \updownarrow & \updownarrow & \updownarrow & \updownarrow \\
[2,2,2,2] & [2,0,1,3] & [2,3,0,1] & [3,0,2,1] & [3,2,1,0].
\end{matrix}
\]





Since $\tau \circ g$ is bijective on the glue code it belongs to $\text{Aut}(N(D_6^4))$. The same conclusion is obtained if $\tau$ is an arbitrary transposition.

\begin{rem}
The well-known isomorphism $G_1(N(D_6^4))\rtimes G_2(N(D_6^4)) \simeq \mathcal S_4$ \cite{CS} can be explicited as 
\[
\begin{matrix}
\mathcal S_4 & \rightarrow & \text{Aut}(N(D_6^4)) \\
\sigma & \mapsto & \sigma \circ g^{e(\sigma)}
\end{matrix}
\]
where $e(\sigma)=0$ if $\sigma$ is even and $1$ otherwise.

\end{rem}

\begin{rem}
 Moreover if $\tau$ permutes the two last components, $\tau \circ g$ fixes the glue vectors having their two first components made with $0$ or $2$, permutes the glue vectors beginning by $0$ on one side and the glue vectors beginning by $2$ on the other side; also it transforms the glue vectors beginning by $1$ into the glue vectors beginning by $3$. 
\end{rem}

\end{proof}

\begin{cor}
The two primitive embeddings of $A_5 \oplus A_1$ in $N(D_6^4)$ given by

$(i_1(A_5),d_6,0,0)$ and $(i_2(A_5),d_5,0,0)$ are isomorphic by an element of $\text{Aut}(N(D_6^4))$.
\end{cor}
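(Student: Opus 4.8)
The plan is to exhibit an explicit element of $\text{Aut}(N(D_6^4))$ carrying the first embedding onto the second, the natural candidate being the map $\tau\circ g$ furnished by the Theorem. First I would pin down the two embeddings as sublattices. In both cases the copy of $A_5$ lies in the first $D_6$ component, via $i_1$ respectively $i_2$, while the copy of $A_1$ must lie in a \emph{different} component. Indeed, writing $D_6$ in the usual $\pm e_i\pm e_j$ model, $i_1(A_5)=\langle d_1,\dots,d_5\rangle$ spans the hyperplane $\sum x_i=0$, whose orthogonal line is spanned by $(1,1,1,1,1,1)$, which is not a root; hence no root of $D_6$ is orthogonal to $i_1(A_5)$, and $A_5\oplus A_1$ cannot fit in a single $D_6$. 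So $\phi_1$ sends $A_5\mapsto i_1(A_5)\subset D_6^{(1)}$ and $A_1\mapsto\langle d_6\rangle\subset D_6^{(2)}$, while $\phi_2$ sends $A_5\mapsto i_2(A_5)\subset D_6^{(1)}$ and $A_1\mapsto\langle d_5\rangle\subset D_6^{(2)}$, the third and fourth components being unused by both.

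Next I would set $\Phi:=\tau\circ g$, where $g$ acts diagonally on all four $D_6$ components, interchanging $d_5$ and $d_6$ in each as in the Lemma, and $\tau$ is the transposition of the two \emph{unused} components, namely the third and fourth. By the Theorem, $\Phi\in\text{Aut}(N(D_6^4))$. The point of composing with a transposition is that $g$ taken diagonally is not by itself an automorphism: on the glue code it interchanges the symbols $1$ and $3$, turning an even permutation of $[0,1,2,3]$ into an odd one and so leaving the glue code; the positional transposition $\tau$ flips the parity back, and crucially we are free to choose $\tau$ among the components disjoint from the support of the two embeddings.

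It then remains to check that $\Phi$ matches $\phi_1$ with $\phi_2$. On the first component the Lemma gives $g(i_1(A_5))=(g(d_5),d_4,d_3,d_2,d_1)=(d_6,d_4,d_3,d_2,d_1)=i_2(A_5)$; on the second, $g(\langle d_6\rangle)=\langle d_5\rangle$; on the third and fourth both images vanish, so $\tau$ has no effect there. Hence $\Phi\circ\phi_1=\phi_2$, and since a lattice automorphism preserves primitivity, the two primitive embeddings are isomorphic via $\Phi$.

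I expect the only delicate step to be the bookkeeping of the first paragraph: locating the $A_1$ factor in a component separate from the $A_5$, and confirming that the two components carrying the embeddings, $D_6^{(1)}$ and $D_6^{(2)}$, are disjoint from the pair $D_6^{(3)},D_6^{(4)}$ swapped by $\tau$. Once this is arranged, the membership $\Phi\in\text{Aut}(N(D_6^4))$ is supplied directly by the Theorem and the verification reduces to the one-line computation above.
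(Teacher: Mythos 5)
Your proof is correct and follows essentially the same route as the paper: both take $\Phi=\tau\circ g$ with $\tau$ the transposition of the two unused ($D_6^{(3)},D_6^{(4)}$) components, invoke the Theorem for $\Phi\in\text{Aut}(N(D_6^4))$, and check componentwise that $\Phi$ carries $(i_1(A_5),d_6,0,0)$ to $(i_2(A_5),d_5,0,0)$. Your added remarks (why the $A_1$ must sit in a separate $D_6$, and the parity explanation for why $g$ alone fails to preserve the glue code) are accurate but supplementary to the paper's one-line argument.
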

\begin{proof}
We take for $\tau$ the transposition of the two last components. We get that $\tau \circ g$ interchanges the two embeddings and by the previous theorem belongs to $\text{Aut}(N(D_6^4))$.
\end{proof}

\section{The Niemeier lattice $N(A_9^2D_6)$}

Aside the glue vectors of $D_6$ defined in the previous section, the glue group of $A_9$ is cyclic, generated by $\alpha$, see for example \cite{BL} or \cite{CS}:
\[\alpha=\frac{1}{10}[9a_1+8a_2+7a_3+6a_4+5a_5+4a_6+3a_7+2a_8+a_9].\]
By abuse of notation we write $1$ for the class of $\alpha$ in $A_9^*/A_9$ and more generally $i$ for the class of $i\alpha$. We recall that 
\[ \text{Aut}(A_9)=W(A_9)\rtimes G_1(A_9),\]
where $W(A_9)$ denotes the Weyl group and $G_1(A_9)$ consists in the automorphisms of the Dynkin diagram of $A_9$ forming a group of order $2$ exchanging $a_i$ and $a_{10-i}$ for all $1\leq i \leq 9$ and therefore $i$ and $10-i$ according to the above convention. This automorphism acting on the first (resp. second) factor  $A_9$ of $L_{\text{root}}$ will be denoted $\gamma_1$ (resp. $\gamma_2$). It follows
\[G_1(L_{\text{root}})=G_1(A_9^{(1)}A_9^{(2)}D_6)\simeq \mathbb Z/2\mathbb Z\times \mathbb Z/2\mathbb Z \times \mathbb Z/2\mathbb Z,\]
\[G_2(L_{\text{root}})\simeq \mathbb Z/2\mathbb Z =\langle h \rangle,\]
where $h$ exchanges the two copies of $A_9$.

Set $\gamma=\gamma_1 \gamma_2$, $h_1=\gamma_1 g$ and $h_2=\gamma_2 g$.

\begin{prop}
\begin{enumerate}
\item The subgroup $G_1(L)=\text{Aut}(L)\cap G_1(L_{\text{root}})\simeq \mathbb Z/2\mathbb Z$ is generated by $\gamma$.
\item The automorphism $h$ of $G_2(A_9^{(1)}A_9^{(2)}D_6)$ is an automorphism of $G_2(L)=G_2(N(A_9^{(1)}A_9^{(2)}D_6))$; moreover $h_1h$ and $h_2h$ belong to $\text{Aut}(L)$. Hence the subgroup $G_2(L)$ is generated by $h$.
\item The subgroup $H(L)=(G_1(L_{\text{root}})\rtimes  G_2(L_{\text{root}}))\cap \text{Aut}(L)$ is generated by $h_1h$ and $h_2h$.
\end{enumerate}
\end{prop}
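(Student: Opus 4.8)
The guiding principle is the criterion recorded at the end of Section 2: a transformation in $G_1(L_{\text{root}})\rtimes G_2(L_{\text{root}})$ extends to an automorphism of $L$, equivalently lies in $H(L)$, if and only if its induced action on the quotient $L/L_{\text{root}}$ stabilizes the glue code. Hence the whole proposition reduces to a finite check: write down the glue code of $N(A_9^2D_6)$ and test the candidate transformations against it. First I would record the induced action on a glue vector $[i,j,k]$, where $i,j\in\mathbb{Z}/10\mathbb{Z}$ are the two $A_9$-classes and $k\in\{0,1,2,3\}$ the $D_6$-class: $\gamma_1$ sends it to $[-i,j,k]$, $\gamma_2$ to $[i,-j,k]$, $h$ to $[j,i,k]$, while $g$ fixes $i$ and $j$ and acts on $k$ by $[1]\leftrightarrow[3]$, fixing $[0]$ and $[2]$ (Lemma 1). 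The glue code has order $\sqrt{10^2\cdot 4}=20$; it is generated by one vector of order $10$, say $v_1=[1,3,2]$, together with one order-$2$ vector, say $v_2=[5,0,1]$, and I would list its $20$ elements once and for all so that every later membership test becomes immediate.

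For part (1), I would run through the eight elements of $G_1(L_{\text{root}})=\langle\gamma_1,\gamma_2,g\rangle$. For each of the six transformations other than $1$ and $\gamma=\gamma_1\gamma_2$ it suffices to exhibit a single glue vector thrown outside the code (for instance $\gamma_1$, $g$ and $\gamma_1 g$ each move $v_1$ or $v_2$ out), whereas $\gamma$ maps $v_1\mapsto[-1,-3,2]=9v_1$ and $v_2\mapsto[-5,0,1]=v_2$, so $\gamma$ stabilizes the code. This yields $G_1(L)=\langle\gamma\rangle\simeq\mathbb{Z}/2\mathbb{Z}$.

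For parts (2) and (3) I would check that $h_1 h=\gamma_1 g h$ and $h_2 h=\gamma_2 g h$ stabilize the code, again by testing on the two generators; for example $h_1 h$ sends $v_1\mapsto[7,1,2]=7v_1$ and $v_2\mapsto[0,5,3]$, both in the code. Since $h_1=\gamma_1 g\in G_1(L_{\text{root}})$ and $h_1 h\in\text{Aut}(L)$, the very definition of $G_2(L)$ gives $h\in G_2(L)$; as $G_2(L)\subseteq G_2(L_{\text{root}})=\langle h\rangle$ this forces $G_2(L)=\langle h\rangle$, proving (2). For (3) I would invoke the exact sequence (\ref{Eq:suite}): with $|G_1(L)|=|G_2(L)|=2$ it gives $|H(L)|=4$, so the four elements $1,\gamma,h_1 h,h_2 h$ already shown to preserve the code must exhaust $H(L)$. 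The group law is then read off from the relations $h\gamma_1 h=\gamma_2$ and $[g,h]=[g,\gamma_i]=1$: one finds $(h_1 h)^2=\gamma$ and $(h_1 h)(h_2 h)=\text{Id}$, whence $H(L)=\langle h_1 h\rangle\simeq\mathbb{Z}/4\mathbb{Z}$ with $h_2 h=(h_1 h)^{-1}$; in particular $H(L)=\langle h_1 h,h_2 h\rangle$ as claimed.

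The only real obstacle is bookkeeping: producing the correct glue code and, above all, computing the induced action on it without error. The $D_6$-component needs care, because $g$ permutes $[1]$ and $[3]$ while the gluing obeys $[1]+[2]=[3]$, so adding glue vectors intertwines the $A_9$- and $D_6$-parts. The one genuinely non-obvious point is that $h_1 h$ has order $4$, with square $\gamma$, rather than being an involution; this is why $H(L)$ comes out cyclic of order $4$ rather than a Klein four-group, and it is the fact that must actually be verified at the end instead of assumed.
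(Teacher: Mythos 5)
Your overall strategy is exactly the paper's: everything reduces to testing which elements of $G_1(L_{\text{root}})\rtimes G_2(L_{\text{root}})$ preserve the glue code of $N(A_9^2D_6)$, and you single out the right elements to test. You also go beyond the paper in two sound ways: you propose to verify $G_1(L)\simeq\mathbb Z/2\mathbb Z$ and $G_2(L)\simeq\mathbb Z/2\mathbb Z$ by direct exhaustion and the exact sequence (\ref{Eq:suite}), where the paper simply cites Conway--Sloane for these orders; and you observe that $(h_1h)^2=\gamma$ and $h_2h=(h_1h)^{-1}$, so that $H(L)$ is cyclic of order $4$ generated by $h_1h$ alone --- a sharper structural statement than ``generated by $h_1h$ and $h_2h$''. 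Those group-theoretic computations ($h\gamma_1h=\gamma_2$, $g$ central) are correct.

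There is, however, a concrete error in the one place where, as you yourself note, the whole proof lives. The glue code is generated by $[2,4,0]$, $[5,0,1]$, $[0,5,3]$, so a general glue vector is $[2a+5b,\,4a+5c,\,b[1]+c[3]]$ with $a\in\mathbb Z/5\mathbb Z$ and $b,c\in\mathbb Z/2\mathbb Z$. The sum of its first two entries is $6a+5(b+c)$, which is congruent to $4$ mod $10$ only when $b=c$ and $a=4$, yielding first entries $(8,6)$ or $(3,1)$ --- never $(1,3)$. Hence your order-$10$ generator $v_1=[1,3,2]$ is \emph{not} a glue vector (admissible choices would be $[7,4,1]=[2,4,0]+[5,0,1]$, or $[9,3,2]$ if you want $D_6$-entry $[2]$). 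Consequently every check performed on $v_1$ --- that $\gamma$ fixes its class, that $h_1h$ sends it to $7v_1$, that $\gamma_1$ throws it out of the code --- tests stability of a cyclic group that is not $L/L_{\text{root}}$, and as written the argument establishes neither that $\gamma$ and $h_1h$ lie in $\text{Aut}(L)$ nor that $\gamma_1$ does not. The computations on $v_2=[5,0,1]$ are correct and agree with the paper's ($h_1h([5,0,1])=[0,5,3]$), but one valid generator out of two does not determine the action on the code; note also that $v_2$ alone cannot rule out $\gamma_1$ and $\gamma_2$, since both fix it. The repair is purely clerical --- rerun the same finite checks on a correct generating pair --- but until that is done the verification that constitutes the proof has not actually been carried out.
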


\begin{proof}
Recall, \cite{CS}, that the glue code is generated by 
\[[2,4,0],\,\,\,\,[5,0,1],\,\,\,\,[0,5,3],\]
 and that $G_1(L)\simeq \mathbb Z/2\mathbb Z$ and  $G_2(L)\simeq \mathbb Z/2\mathbb Z$.
\begin{enumerate}
\item
We verify that $\gamma$ belongs to $G_1(L_{\text{root}})$, preserves the glue code and is of order $2$.
\item According to $2.2$, it suffices to exhibit an element $h_1\in G_1(L_{\text{root}})$ such that $h_1h\in (G_1(L_{\text{root}})\rtimes  G_2(L_{\text{root}}))\cap \text{Aut}(L)$, i.e. preserving the glue code of $L$. We verify easily
$h_1h([2,4,0])=[6,2,0]=3\times [2,4,0]$, $h_1h([5,0,1])=[0,5,3]$ and $h_1h([0,5,3])=[5,0,1]$. Thus $h\in \text{Aut}(L)$ and generates $G_2(L)$ since $G_2(L)\simeq \mathbb Z/2\mathbb Z$.
\item This follows from the previous item and the isomorphisms $G_1(L)\simeq \mathbb Z/2\mathbb Z$ and $G_2(L)\simeq \mathbb Z/2\mathbb Z$ \cite{CS}.
\end{enumerate}
\end{proof}

\begin{cor}\label{cor:1}
The two primitive embeddings of $A_5$ in $D_6$, namely $i_1$ and $i_2$, correspond to at most two elliptic fibrations of $X$, non isomorphic by an automorphism of $N(A_9^{(1)}A_9^{(2)}D_6)$.

\end{cor}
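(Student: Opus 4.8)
The plan is to translate the statement into a question about primitive embeddings of $A_5\oplus A_1$ into $L=N(A_9^{(1)}A_9^{(2)}D_6)$ and to show that $\mathrm{Aut}(L)$ cannot interchange the two candidate embeddings coming from $i_1$ and $i_2$. First I would write down the two embeddings explicitly: since $A_5\oplus A_1$ must land in $L_{\mathrm{root}}=A_9\oplus A_9\oplus D_6$ and $T=A_5\oplus A_1$ is the orthogonal complement of $T_X(-1)$ in $E_8$, the $A_5$ part is placed inside the $D_6$ factor via $i_1(A_5)=(d_5,d_4,d_3,d_2,d_1)$ or $i_2(A_5)=(d_6,d_4,d_3,d_2,d_1)$, with the $A_1$ placed in a fixed node (here $d_6$ respectively $d_5$, following the pattern of the Corollary in the $N(D_6^4)$ section). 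The two embeddings are interchanged inside $D_6$ precisely by the diagram automorphism $g\in G_1(D_6)$ of Lemma~1, which swaps $d_5$ and $d_6$.

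The key structural input is the decomposition $\mathrm{Aut}(L)=G_0(L)\rtimes H(L)$, where $G_0(L)=G_0(L_{\mathrm{root}})$ is the Weyl group and, by the Proposition just proved, $H(L)$ is generated by $h_1h$ and $h_2h$ (with $G_1(L)=\langle\gamma\rangle$ and $G_2(L)=\langle h\rangle$). So I would argue that any automorphism of $L$ carrying one embedding to the other factors, after composing with a Weyl element, into the part of $H(L)$ acting on the $D_6$ component. The Weyl group $G_0$ acts within each fixed root lattice and by Lemma~1 alone cannot realize the $g$-swap of $d_5,d_6$ (that swap is a diagram automorphism, not a Weyl element of $D_6$); meanwhile $h$ permutes the two $A_9$ copies and fixes $D_6$, so $h$ does nothing to the $D_6$-placement. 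Thus the only way to produce the $g$-action on $D_6$ inside $\mathrm{Aut}(L)$ would be through an element of $H(L)$ whose $D_6$-component is $g$, and I would check from the generators $h_1h,h_2h$ (which involve $\gamma_1,\gamma_2$ acting on the $A_9$'s and $g$ on $D_6$) whether such an element exists while also preserving the glue code.

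The main obstacle, and the crux of the proof, is precisely this glue-code compatibility: I expect that the element of $H(L)$ acting as $g$ on $D_6$ necessarily also acts nontrivially (as $\gamma_1$ or $\gamma_2$) on the $A_9$ factors, and I must verify whether the resulting permutation of the embedded $A_5\oplus A_1$ can be undone by a Weyl element of $G_0(L)$ without moving the $A_5$ and $A_1$ out of their prescribed $D_6$ positions. Since the corollary only claims \emph{at most} two fibrations (leaving the genuine non-isomorphism for later), the cleanest argument is the counting/upper-bound direction: the pair $(i_1,i_2)$ determines at most two orbits under $\mathrm{Aut}(L)$, because every other automorphism either preserves each embedding's $D_6$-placement or swaps $i_1\leftrightarrow i_2$ via the available $g$-action, and no third placement arises. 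Concretely I would enumerate the orbit of the embedding $(i_1(A_5),*)$ under the generators $\gamma$, $h_1h$, $h_2h$ and observe that it meets at most the two classes represented by $i_1$ and $i_2$, which gives the bound of two. The delicate point to watch is that the $A_1$ node ($d_6$ or $d_5$) is carried correctly by $g$, so that the images remain genuine primitive embeddings of $A_5\oplus A_1$ rather than of some larger or differently-shaped lattice.
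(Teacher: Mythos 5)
There is a genuine gap, and it starts with the setup. In $N(A_9^{(1)}A_9^{(2)}D_6)$ the $A_1$ summand cannot be placed in the $D_6$ factor alongside $i_j(A_5)$: the orthogonal complement of $i_1(A_5)$ (resp.\ $i_2(A_5)$) in $D_6$ is the rank-one lattice generated by $z_6=2\delta_6$ (resp.\ $\tilde z_6=2\tilde\delta_6$) of norm $-6$, which contains no roots. Your choice ``$A_1=d_6$ resp.\ $d_5$'' copies the pattern of the corollary for $N(D_6^4)$, but there the node $d_6$ sits in a \emph{second} $D_6$ factor; here there is only one $D_6$, so the $A_1$ must be embedded, up to the Weyl group, as $a_1$ in one of the two $A_9$ factors. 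This changes the combinatorics entirely: the a priori list is the four combinations $(A_1\subset A_9^{(1)}\text{ or }A_9^{(2)})\times(i_1\text{ or }i_2)$, and the whole content of the corollary is that these collapse to at most two classes. The paper's proof does exactly this by exhibiting the identifications: $h_2h$ carries $(a_1\in A_9^{(1)},\,i_1(A_5)\subset D_6)$ to $(a_9\in A_9^{(2)},\,i_2(A_5)\subset D_6)$, and $h_1h$ carries $(a_1\in A_9^{(2)},\,i_1)$ to $(a_9\in A_9^{(1)},\,i_2)$. With your setup there are only two candidate embeddings to begin with, so ``at most two'' becomes vacuous and the actual argument never appears.

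A second, related misdirection: most of your effort goes into showing that $\mathrm{Aut}(L)$ \emph{cannot} interchange the two embeddings, i.e.\ the non-isomorphism. That is not what this corollary needs; its proof needs automorphisms that \emph{do} identify pairs of embeddings (to obtain the upper bound), and the genuine distinctness of the two remaining classes is deliberately postponed to the computation of the two Mordell--Weil lattices, whose height forms $\frac{1}{40}(61x^2+4xy+4y^2)$ and $\frac{1}{10}(x^2+15y^2)$ are shown to be inequivalent. That extra step is unavoidable, because two embeddings lying in distinct $\mathrm{Aut}(L)$-orbits could still a priori produce isomorphic frames, hence isomorphic fibrations. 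Your final paragraph does gesture at the right orbit enumeration under $\gamma$, $h_1h$, $h_2h$, but because the $A_1$ has been misplaced the enumeration runs over the wrong set and cannot produce the pairing that the proof actually requires.
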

\begin{proof}
From the proposition we deduce that the fibration obtained with the embeddings $A_1=a_1$ in $A_9^{(1)}$ and $i_1(A_5)$ in $D_6$ is isomorphic by the automorphism  $h_2h$ to $A_1=a_9$ embedded in $A_9^{(2)}$ and $i_2(A_5)$ in $D_6$. Similarly,  the fibration obtained with the embeddings $A_1=a_1$ in $A_9^{(2)}$ and $i_1(A_5)$ in $D_6$ is isomorphic by the automorphism of $h_1h$ to $A_1=a_9$ embedded in $A_9^{(1)}$ and $i_2(A_5)$ in $D_6$.

\end{proof}

\section{From primitive embeddings to Mordell-Weil lattices}
Let $X$ the $K3$-surface of discriminant $-12$ studied in \cite{BGL}. To each primitive embedding of $A_5\oplus A_1$ in $L_{\text{root}}$ for $L$ Niemeier lattice, corresponds an elliptic fibration of $X$. Define $W=(A_5\oplus A_1)^{\perp_L}$ and $N=(A_5 \oplus A_1)^{\perp_{L_{\text{root}}}}$. First observe that $W_{\text{root}}=N_{\text{root}}$. Then the configuration of singular fibers in the corresponding elliptic fibration is encoded in the trivial lattice $T(X)$ of the elliptic fibration given by
\[T(X)=U\oplus W_{\text{root}}.\]
The torsion group is given by $\overline{W_{\text{root}}}/W_{\text{root}}$.

The Mordell-Weil lattice $MWL(X)$, that is the Mordell-Weil group modulo its torsion subgroup equipped with the height pairing is given by
\[MWL(X)=W/\overline{W_{\text{root}}},\]
where the bar means the primitive closure.
The height pairing of two points $P$ and $Q$ of the Mordell-Weil group is given by the Shioda's formulae
\begin{equation}\label{E:S1}
\langle P,Q \rangle =2+\bar{P}.\bar{O}+\bar{Q}.\bar{O}-\bar{P}.\bar{Q}-\sum_v \text{contr}_v(P,Q)
\end{equation}
and the height of $P$ by
\begin{equation}\label{E:S2}
h(P)=\langle P,P\rangle =4+2\bar{P}.\bar{O}-\sum_v\text{contr}_v(P)
\end{equation}
where $O$ denotes the zero, the bar their associated sections and $v$ runs through the singular fibers. If $\Theta_{v,i}$ is a component of the singular fiber $\Theta_v$ and if $P$  (resp. $Q$) intersects $\Theta_{v,i}$ (resp. $\Theta_{v,j}$), $i<j$, we recall the table of their contributions, Table \ref{table:cont}.

\begin{table}\caption{Contributions for the height pairing} \label{table:cont}
\begin{center}
\begin{tabular}{|l|l|l|l|l|}
\hline
  fiber &  $IV^*$  & $III^*$  &  $I_n\,\,n>1$  & $I_n^*$ \\
\hline
Dynkin diagram& $E_6$    & $E_7$  & $A_{n-1}$  & $D_{n+4}$ \\
\hline
$i=j$ & $4/3$ & $3/2$ & $i(n-i)/n$ & $\begin{cases}
                                      1 & i=1\\
 
                                      1+n/4 & i=2,3
                                      \end{cases}$ \\
\hline

$i<j$  &  $2/3$   &  -- & $i(n-j)/n$ & $\begin{cases}
                                       1/2 & i=1\\
                                       1/2+n/4 & i=2,3
                                      \end{cases}$ \\

\hline

\end{tabular}
\end{center}
\end{table}

Recall that the single components of an $I_n^*$ fiber, $n>0$, are distinguished into the near component $\Theta_1$ which intersects the same double component as the zero component and the far components $\Theta_2$, $\Theta_3$.   
\subsection{Defining sections of our fibrations}
In each class of $W/N$ we choose a representative in order to form either a torsion or an infinite section of the fibration. The section $V$ is defined as
\[V=kF+mO+\omega,\]
$F$ being the generic fiber, $O$ the zero section, $\omega$ a well choosed glue vector in a coset of $W/N$. Since $V$ has to satisfy $V.F=1$, it follows $m=1$. The rational integer $k$ can be obtained from the relation $V.V=-2$, since $\omega . \omega$ is even. Finally the glue vector $\omega$ is choosed so that $V$ cuts each singular fiber in exactly one point. Then we test if the section cuts or not the zero section in order to apply the height formula (\ref{E:S2}). Sections with height $0$ are torsion sections. Moreover we have to determine infinite sections with a height matrix giving the discriminant of the $K3$ surface, that is in our case $12$, according to the formula \cite{ScSh}
\begin{equation}\label{F:disc}
\text{disc}(NS(X))=(-1)^{\text{rank}E(K)}\text{disc}(T(X))\text{disc}(MWL(X))/(\#E(K)_{\text{tors}})^2.
\end{equation}

\section{The elliptic fibration from $L=N(D_6^4)$}
Take the unique, up to $\text{Aut}(L)$, primitive embedding of $A_5\oplus A_1$ in $L$
given by $\phi(A_5\oplus A_1)=(i_1(A_5),d_6,0,0)$. We get

\[(i_1(A_5))^{\perp_{D_6}} =z_6 =2\delta_6 =2[1], \]

\[(A_1)^{\perp_{D_6}}=\langle d_5 \rangle \oplus \langle x_3:=d_5+d_6+2d_4+d_3,d_3,d_2,d_1 \rangle= A_1\oplus D_4,\]

\[N: =((i_1(A_5)\oplus A_1)^{\perp_{L_{\text{root}}}} =(\langle z_6 \rangle , A_1\oplus D_4,D_6,D_6)\]

and $N_{\text{root}}=(0,A_1\oplus D_4,D_6,D_6)$.
Since $\det N=12\times 4^3$, $\det W=12$, it follows
\[W/N\simeq  \left ( \mathbb Z /2 \mathbb Z \right )^3.\]
An elliptic fibration is characterized by its torsion sections, infinite sections and where these sections cut the singular fibers of the fibration. All these data are encoded in $W/N$ and so we shall first compute these groups. 

Observing that $[2]$ and $[3]$ do not belong to $i_1(A_5)^{\perp_{D_6^*}}$, the elements of the glue code (\ref{glue:G}) belonging to $W/N$ are only those beginning by $0$ or $1$, precisely
\[
\begin{matrix}
[0,0,0,0] & [0,1,2,3] & [0,3,1,2] & [0,2,3,1]\\
[1,1,1,1] & [1,0,3,2] & [1,3,2,0] & [1,2,0,3].
\end{matrix}
\]
Among them only those beginning by $0$ belongs to $\overline{W}_{\text{root}}$. Thus torsion sections can be realized only from the glue vectors
\[[0,0,0,0] , [0,1,2,3] , [0,3,1,2] , [0,2,3,1].\]

Moreover we must choose in them elements belonging to $\overline{W}_{\text{root}}$. Since, in the coset $[3]$, $\tilde{\delta}_6$ satisfies
\[2\tilde{\delta}_6 =d_1+2d_2+d_3+2x_3+d_5 \in D_4\oplus A_1\]
and in coset $[2]$,
\[2\bar{\delta}_6 =2d_1+2d_2+d_3+x_3 \in D_4\]
it is possible to write torsion sections from $[0,3,1,2]$, $[0,2,3,1]$ and $[0,0,0,0]$. It remains to find in the coset $[1]$ an element with the same property, that is $\delta-d_3-d_4-d_6$, since

\[2\delta_6 -2d_3-2d_4-2d_6=d_1+2d_2+x_3+d_5\in D_4\oplus A_1.\]

The Mordell-Weil lattice being $W/\overline{W_{\text{root}}}$, the infinite sections can be realized from the classes
\[[1,0,3,2]  ,  [1,2,0,3] , [1,3,2,0]  , [1,1-d_3-d_4-d_6,1,1].\]

The various contributions to the singular fibers can be derived from Table \ref{table:cont}.

\begin{table}\caption{Contributions}\label{table:cont1}
\begin{center}
\begin{tabular}{|l|l|l|l|l|}
\hline
   &  Contr. on     &  Contr. on  &  Contr. on & Contr. on    \\
   &   $D_4$   &  $ A_1$ &  $D_6$  & $D_6$   \\
\hline
$\bar{\delta}_6 \in [2]$ & 1    & 0  &1  & 1  \\
\hline
$\tilde{\delta}_6 \in [3]$ & 1 & 1/2 & 1+1/2 & 1+1/2  \\
\hline
$\delta-d_3-d_4-d_6 \in [1]$ & 1 & 1/2 & 1+1/2 & 1+1/2  \\
\hline
$\delta \in [1]$ & 1 & 0 & 1+1/2 & 1+1/2  \\
\hline
\end{tabular}
\end{center}
\end{table}

Taking in account the different values $\delta_6^2=\tilde{\delta}_6^2=(\delta_6 -d_3-d_4-d_6)^2=-3/2$ and $\bar{\delta}_6^2=-1$, we can draw a table with the various contributions to height for the different sections in Table \ref{table:height0}.

\begin{table}
\caption{Contributions and heights of the sections from $N(D_6^4)$}\label{table:height0}
\begin{center}
\begin{tabular}{|c|lccccc|}

\hline
 &    &  Contr.      &  Contr.   &  Contr.  & Contr. & ht. \\
  &  &   $D_4$   &  $ A_1$ &  $D_6$  & $D_6$  &  \\
\hline
${\scriptstyle Q_1}$ & $0+2F+[0,2,3,1]$ & 1    & 0  &3/2  & 3/2 & 0 \\
${\scriptstyle Q_3}$  & $0+2F+[0,3,1,2]$   & 1 & 1/2 & 3/2 & 1 & 0 \\
${\scriptstyle Q_2}$  & $0+2F+[0,1{\scriptstyle -d_3-d_4-d_6},2,3]$     & 1 & 1/2 & 1 & 3/2 & 0 \\
${\scriptstyle W_1}$ & $0+2F+[1,0,3,2]$    & 0 & 0 & 3/2 & 1 & 3/2 \\
${\scriptstyle W_1+Q_1}$ & $0+2F+[1,2,0,3]$ & 1 & 0 & 0 & 3/2 & 3/2\\
${\scriptstyle W_1+Q_3}$ & $0+2F+[1,3,2,0]$ & 1 & 1/2 & 1 & 0 & 3/2 \\
${\scriptstyle W_1+Q_2}$ & $0+3F+[1,1{\scriptstyle -d_3-d_4-d_6},1,1]$ & 1 & 1/2 & 3/2 & 3/2 & 3/2\\
\hline

\end{tabular}
\end{center}
\end{table}

It is easily derived that the torsion group of the elliptic fibration is isomorphic to $\mathbb Z/2\mathbb Z \times \mathbb Z/2\mathbb Z$ and the Mordell-Weil lattice is generated by a section of height $3/2$, in concordance with the formula (\ref{F:disc}),
\[-12=\text{disc}NS(X)=-4 \times 4\times 4 \times 2 \times \frac{3}{2}\times \frac{1}{4^2}.\]

Thus we have proved the following result.

\begin{prop}

The elliptic fibration on the $K3$-surface $X$ derived from Niemeier lattice $L=N(D_6^4)$ has singular fibers of type $A_1$ ($I_2$), $D_4$ ($I_0^*)$, $D_6$ ($I_2^*$), $D_6$ ($I_2^*$). Its Mordell-Weil group has rank $1$ and torsion part isomorphic to $\mathbb Z/2\mathbb Z \times \mathbb Z/2\mathbb Z$. Its Mordell-Weil lattice is generated by an infinite section of height $3/2$.
\end{prop}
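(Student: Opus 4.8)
The plan is to read off the statement directly from the lattice-theoretic data already assembled in the excerpt, since the Kneser--Nishiyama dictionary converts each piece of information about $N$, $W$, and $W/N$ into a feature of the elliptic fibration. First I would record the singular fibers. By construction $N=(\langle z_6\rangle, A_1\oplus D_4, D_6, D_6)$ and $N_{\text{root}}=(0,A_1\oplus D_4,D_6,D_6)$, and since $W_{\text{root}}=N_{\text{root}}$, the trivial lattice is $T(X)=U\oplus(A_1\oplus D_4\oplus D_6\oplus D_6)$. The translation from Dynkin type to Kodaira type is the standard one (also visible in Table~\ref{table:cont}): $A_1$ gives a fiber of type $I_2$, $D_4$ gives $I_0^*$, and each $D_6$ gives $I_2^*$. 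This yields exactly the four reducible fibers claimed.

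Next I would establish the Mordell--Weil group. The rank follows from formula (\ref{F:2}): $\operatorname{rk}MW=\operatorname{rk}W-\operatorname{rk}W_{\text{root}}$. Here $\operatorname{rk}W=\operatorname{rk}L-\operatorname{rk}(A_5\oplus A_1)=24-6=18$, while $\operatorname{rk}W_{\text{root}}=\operatorname{rk}(A_1\oplus D_4\oplus D_6\oplus D_6)=1+4+6+6=17$, so the rank is $1$. For the torsion, I would use $(MW)_{\text{tors}}=\overline{W_{\text{root}}}/W_{\text{root}}$, equivalently the image of $W/N\simeq(\mathbb Z/2\mathbb Z)^3$ lying in $\overline{W_{\text{root}}}$. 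The discussion just before Table~\ref{table:height0} identifies precisely the four glue vectors beginning by $0$, namely $[0,0,0,0],[0,1,2,3],[0,3,1,2],[0,2,3,1]$, as those whose doubles land in $D_4\oplus A_1\subset W_{\text{root}}$ (the explicit computations $2\tilde\delta_6,2\bar\delta_6\in D_4\oplus A_1$, etc.). These form a subgroup isomorphic to $\mathbb Z/2\mathbb Z\times\mathbb Z/2\mathbb Z$, giving the torsion sections $Q_1,Q_2,Q_3$ in the table, each of height $0$.

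Finally I would pin down the Mordell--Weil lattice. The four remaining glue vectors (those beginning by $1$) produce the infinite sections $W_1$, $W_1+Q_1$, $W_1+Q_3$, $W_1+Q_2$, and a height computation via Shioda's formula (\ref{E:S2}) together with the contributions in Table~\ref{table:cont1} shows each has height $3/2$ (the entries assembled in Table~\ref{table:height0}). Since the rank is $1$, $MWL(X)$ is generated by a single section, and the height $3/2$ is confirmed by the consistency check against the global discriminant formula (\ref{F:disc}): plugging in $\operatorname{disc}T(X)$ from the fibers ($D_4$ contributes $4$, each $D_6$ contributes $4$, $A_1$ contributes $2$, with the overall sign $-4$ from $U\oplus\cdots$), the height $3/2$, and torsion order $4$, one recovers $-12$, exactly the equality displayed after the table.

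The main obstacle is not conceptual but bookkeeping: verifying that precisely the doubles of the coset representatives $\delta_6-d_3-d_4-d_6$, $\bar\delta_6$, $\tilde\delta_6$ fall into $D_4\oplus A_1$ (and not merely into $N$), so that the torsion is genuinely $(\mathbb Z/2\mathbb Z)^2$ and not smaller, and then carefully evaluating each contribution column in Table~\ref{table:height0} using the correct near/far distinction for the $I_2^*$ (i.e.\ $D_6$) fibers recalled at the end of Section~5. These are the steps where a sign or an index slip would propagate, so I would cross-check the final height against (\ref{F:disc}) as done above; the agreement $-12$ is the decisive confirmation that the torsion, the rank, and the generator height are all correct.
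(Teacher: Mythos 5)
Your proposal is correct and follows essentially the same route as the paper: read off the fiber types from $N_{\text{root}}=(0,A_1\oplus D_4,D_6,D_6)$, identify the glue vectors beginning by $0$ (with the adjusted representatives whose doubles land in $D_4\oplus A_1$) as the $(\mathbb Z/2\mathbb Z)^2$ torsion, take those beginning by $1$ as infinite sections of height $3/2$ via Table~\ref{table:cont1}, and confirm everything against the discriminant formula (\ref{F:disc}). The only addition you make is the explicit rank count $18-17=1$, which the paper leaves implicit.
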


\section{The elliptic fibrations from $L=N(A_9^2D_6)$}
Let $L$ be the Niemeier lattice with $L=N(A_9^{(1)}A_9^{(2)}D_6)$. By \cite{CS} we know that $L$ is obtained from the following glue vectors

\[L/L_{{\text{root}}}=\langle [2,4,0],[5,0,1],[0,5,3] \rangle,\]
where $1$ denotes the coset in $A_9^*/A_9$ of $\alpha=\frac{1}{10}(9a_1+8a_2+7a_3+6a_4+5a_5+4a_6+3a_7+2a_8+a_9)$. 
From Corollary \ref{cor:1} we know that we have at most two elliptic fibrations coming from the Niemeier lattice $L=N(A_9^2D_6)$ non isomorphic by an automorphism of $L$. We shall prove that we have effectively two.

\subsection{First embedding in $D_6$}

We embed $A_1$ in $A_9^{(1)}$ by $\phi(A_1)=\langle a_1^{(1)} \rangle$ and $A_5$ in $D_6$ by $i_1(A_5)=(d_5,d_4,d_3,d_2,d_1)$. As computed in \cite{BGL}, we obtain
\[N=\left ( \phi(A_1)\oplus i_1(A_5)\right ) ^{\perp_{L_{\text{root}}}}=[\langle a_1+2a_2,a_3, ...,a_9 \rangle, A_9,  \langle z_6\rangle ] \]
with $z_6=d_1+2d_2+3d_3+4d_4+2d_5+3d_6$ and $\det(\langle a_1+2a_2,a_3, ...,a_9 \rangle)=2\times 10$; thus $\det(N)=2\times 10\times 10 \times 6$.
It follows $N_{\text{root}}=[\langle a_3,...,a_9\rangle, A_9, 0] \simeq A_7^{(1)}\oplus A_9^{(2)}$ and $W/N=\langle [2,4,0],[5,0,1] \rangle \simeq \mathbb Z/10\mathbb Z$. Since there is no integer $k$ satisfying $k( [2,4,0])\in N_{\text{root}}$ and no integer $k'$ with $k'([5,0,1])\in N_{\text{root}}$, we deduce that $\overline{W_{\text{root}}}/W_{\text{root}}=(0)$ so the corresponding elliptic fibration has trivial torsion and rank $2$.

Now we want to determine the Mordell-Weil lattice of the fibration, in our case
\[\text{MWL}(X)=W/\overline{W_{\text{root}}}\simeq W/W_{\text{root}}.\]

The infinite sections are derived from elements of the glue code of $W/N$, namely from 
\[
\begin{matrix}
[2,4,0],& [4,8,0], & [6,2,0], & [8,6,0], & [0,0,0]\\
[5,0,1], & [7,4,1], & [9,8,1], & [1,2,1], &[3,6,1].
\end{matrix}
\]

We define sections as explained in 5.1 so we search in each coset $j$ an element $\alpha_j$ satisfying $\alpha_j.a_j=1$ and $\alpha_j.a_i=0$. We obtain a unique solution 
\[-\alpha_j:=j\alpha-(j-1)a_1-(j-2)a_2...-a_{j-1}.\]
We observe that $\alpha_j \in W$ for all $j$ but $j=1$. Thus we choose in the coset of $\alpha_1$ an element in $W$ and cutting $A_7=\langle a_3,a_4, ..., a_9\rangle$ in exactly one point, namely $-\bar{\alpha_1}=\alpha -a_1-a_2$. The elements $(\alpha_1, \alpha_2, ..., \alpha_9)$ are in fact the dual elements $(a_1^*, a_2^*, ..., a_9^*)$. So their Gram matrix is minus the inverse matrix of the Gram matrix of the $a_i$, namely

\setlength\arraycolsep{2pt}

\renewcommand{\arraystretch}{1.4}

\begin{equation}\label{matrix:1}
\begin{pmatrix}
\frac{9}{10} & \frac{4}{5} & \frac{7}{10} & \frac{3}{5} & \frac{1}{2} &  \frac{2}{5} & \frac{3}{10} & \frac{1}{5} & \frac{1}{10} \\
\frac{4}{5} & \frac{8}{5} & \frac{7}{5} & \frac{6}{5} & {\scriptstyle 1}  &  \frac{4}{5} & \frac{3}{5} & \frac{2}{5} & \frac{1}{5} \\
\frac{7}{10} & \frac{7}{5} & \frac{21}{10} & \frac{9}{5} & \frac{3}{2} &  \frac{6}{5} & \frac{9}{10} & \frac{3}{5} & \frac{3}{10} \\
\frac{3}{5} & \frac{6}{5} & \frac{9}{5} & \frac{12}{5} & {\scriptstyle 2}  &  \frac{8}{5} & \frac{6}{5} & \frac{4}{5} & \frac{2}{5} \\
\frac{1}{2} & {\scriptstyle 1}  & \frac{3}{2} & {\scriptstyle 2}  & \frac{5}{2} & {\scriptstyle 2}  & \frac{3}{2} & {\scriptstyle 1}  & \frac{1}{2} \\
\frac{2}{5} & \frac{4}{5} & \frac{6}{5} & \frac{8}{5} &{\scriptstyle 2}  &  \frac{12}{5} & \frac{9}{5} & \frac{6}{5} & \frac{3}{5} \\
\frac{3}{10} & \frac{3}{5} & \frac{9}{10} & \frac{6}{5} & \frac{3}{2} &  \frac{9}{5} & \frac{21}{10} & \frac{7}{5} & \frac{7}{10} \\
\frac{1}{5} & \frac{2}{5} & \frac{3}{5} & \frac{4}{5} & {\scriptstyle 1}  &  \frac{6}{5} & \frac{7}{5} & \frac{8}{5} & \frac{4}{5} \\
\frac{1}{10} & \frac{1}{5} & \frac{3}{10} & \frac{2}{5} & \frac{1}{2} &  \frac{3}{5} & \frac{7}{10} & \frac{4}{5} & \frac{9}{10} 
\end{pmatrix}
\end{equation}

We read directly on the above matrix
\[\alpha_1^2=\alpha_9^2=-\frac{9}{10},\alpha_2^2=\alpha_8^2=-\frac{8}{5},\alpha_3^2=\alpha_7^2=-\frac{2
1}{10},\alpha_4^2=\alpha_6^2=-\frac{12}{5},\alpha_5^2=-\frac{5}{2}\]
and we compute $\bar{\alpha_1}^2=-\frac{9}{10}$.

Hence we obtain the nine non zero sections $V_i$, $1\leq i \leq 9$, quoted in the  Table \ref{table:height1}. Using the entries of the matrix (\ref{matrix:1}) we obtain their contributions to the singular fibers, their heights and the various $\langle V_i,V_1 \rangle$ and  $\langle V_i,V_2 \rangle$, according to formulae (\ref{E:S1}) and (\ref{E:S2}). Moreover the determinant of the height matrix of $V_1,V_2$ is equal to $\frac{3}{20}$ fitting with the formula (\ref{F:disc}). These data allow in turn to express $V_j$ for $j\geq 3$ as a linear combination of $V_1$ and $V_2$.
For example, looking for a relation $V_3=aV_1+bV_2$, we compute $\langle V_3, V_k \rangle=a \langle V_1, V_k \rangle +b\langle V_2, V_k \rangle$ with $k=1,2$. Thus we get two equations in $a,b$ and solving  the system it follows $a=b=1$.

Finally the order in the Table \ref{table:height1} refers to the order of the element in $W/N$ of the corresponding section. 

\begin{table}

\begin{center}
\caption{Height and pairing-First embedding} \label{table:height1}
\begin{tabular}
[c]{lllllllll}%
&  & $\scriptstyle{I_{8}}$ & $\scriptstyle{I_{10}}$ & ${\scriptstyle <V_{i},V_{1}>}$ &
${\scriptstyle <V_{i},V_{2}>}$ & ${\scriptstyle ht\left(  V_{i}\right)}  $ &
${\scriptstyle order}$ & \\
$V_{1}$ & ${\scriptstyle O+2F+\left[  \alpha_{9},\alpha_{8},1\right]  }$ & $7$ & $8$ &
$\frac{61}{40}$ & $\frac{1}{20}$ & $\frac{61}{40}$ & ${\scriptstyle 10}$ & $V_1$\\
$V_{2}$ & $ {\scriptstyle O+2F+\left[  \alpha_{8},\alpha_{6},0\right] } $ & $6$ & $6$ &
$\frac{1}{20}$ & $\frac{1}{10}$ & $\frac{1}{10}$ & ${\scriptstyle 5}$ &$V_2$ \\
$V_{3}$ & $ {\scriptstyle O+3F+\left[  \alpha_{7},\alpha_{4},1\right] } $ & $5$ & $4$ &
$\frac{63}{40}$ & $\frac{3}{20}$ & $\frac{69}{40}$ & ${\scriptstyle 10}$ & $V_{1}+V_{2}$\\
$V_{4}$ & $ {\scriptstyle O+2F+\left[  \alpha_{6},\alpha_{2},0\right] } $ & $4$ & $2$ &
$\frac{1}{10}$ & $\frac{1}{5}$ & $\frac{4}{10}$ & ${\scriptstyle 5}$ & $2V_{2}$\\
$V_{5}$ & ${\scriptstyle O+2F+\left[  \alpha_{5},0,1\right] } $ & $3$ & $0$ & $\frac{13}{8}$%
& $\frac{1}{4}$ & $\frac{17}{8}$ & ${\scriptstyle 2}$ & $V_{1}+2V_{2}$\\
$V_{6}$ & ${\scriptstyle O+2F+\left[  \alpha_{4},\alpha_{8},0\right] } $ & $2$ & $8$ &
$\frac{3}{20}$ & $\frac{3}{10}$ & $\frac{9}{10}$ & ${\scriptstyle 5}$ & $3V_{2}$\\
$V_{7}$ & $ {\scriptstyle O+3F+\left[  \alpha_{3},\alpha_{6},1\right] } $ & $1$ & $6$ &
$\frac{67}{40}$ & $\frac{7}{20}$ & $\frac{109}{40}$ & ${\scriptstyle 10}$ & $V_{1}+3V_{2}$\\
$V_{8}$ & $ {\scriptstyle O+2F+\left[  \alpha_{2},\alpha_{4},0\right] } $ & $0$ & $4$ &
$\frac{1}{5}$ & $\frac{2}{5}$ & $\frac{8}{5}$ & ${\scriptstyle 5}$ & $4V_{2}$\\
$V_{9}$ & $ {\scriptstyle O+2F+\left[ \alpha_{1}-a_{1}-a_{2},\alpha_{2},1 \right]}  $ & $1$ & $2$ & $\frac{59}{40}$ &
$\frac{-1}{20}$ & $\frac{61}{40}$ & ${\scriptstyle 10}$ & $V_{1}-V_{2}$\\

$V_{11}$ & ${\scriptstyle O+2F+\left[  -a_1-2a_2-a_{3},0,0\right]  }$ & $2$ & $ 0$ & $\frac{-1}%
{4}$ & $\frac{-1}{2}$ & $\frac{5}{2}$ & ${\scriptstyle 0}$ & $-5V_{2}$\\
$V_{12}$ & ${\scriptstyle O+3F+\left[  0,0,2\delta_{6}\right]  }$ & $0$ & $0$ & $3$ & $0$ &
$6$ &${\scriptstyle 0}$  & $2V_{1}-V_{2}$%
\end{tabular}
\end{center}
\end{table}

\begin{theo}
The Mordell-Weil lattice can be generated by the section $V_{2}$ and another
section whose class in $W/N$ is of order $10$ or $2$ ($V_{1},$ $V_{3}%
,V_{7},V_{9}$ or $V_{5}$). It also can be generated by $V_1$ and $V_3$ or $V_9$. 

The rational quadratic forms associated to these various height matrices are all equivalent to the quadratic form $Q(x,y)=\frac{1}{40}(61x^2+4xy+4y^2)$.

The sublattice of index $10$, $N/N_{root}$ of $W/N_{root}$, is generated by $V_{11}=-5V_{2}$ and $V_{12}$ with 
$<V_{11},V_{12}>=0.$
\end{theo}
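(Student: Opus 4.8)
The plan is to treat the Mordell--Weil lattice as the rank-$2$ lattice $MWL(X)=W/\overline{W_{\text{root}}}=W/N_{\text{root}}$ (the torsion being trivial here) and to read off everything from Table \ref{table:height1} together with the single global constraint coming from (\ref{F:disc}). Since $T(X)=U\oplus A_7\oplus A_9$ has discriminant $(-1)\cdot 8\cdot 10=-80$ and the torsion is trivial, formula (\ref{F:disc}) forces $\text{disc}\,MWL(X)=12/80=3/20$. The organizing principle is then: a pair of sections is a basis of $MWL(X)$ exactly when the determinant of its height matrix equals $3/20$; equivalently, once one basis is fixed, a pair is a basis iff the $2\times 2$ integer matrix expressing it in that basis has determinant $\pm 1$.

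First I would check that $(V_1,V_2)$ itself is a basis. From the table the height matrix is $\frac{1}{40}\left(\begin{smallmatrix}61 & 2\\ 2 & 4\end{smallmatrix}\right)$, whose determinant is $\frac{1}{1600}(61\cdot 4-2\cdot 2)=\frac{240}{1600}=\frac{3}{20}$; matching $\text{disc}\,MWL(X)$, this sublattice has index $1$, so $(V_1,V_2)$ is a basis and its Gram matrix is exactly the stated form $Q(x,y)=\frac{1}{40}(61x^2+4xy+4y^2)$. Next, using the expressions in the last column of Table \ref{table:height1} I would record the coordinate vectors in the $(V_1,V_2)$-basis, namely $V_1=(1,0)$, $V_2=(0,1)$, $V_3=(1,1)$, $V_5=(1,2)$, $V_7=(1,3)$, $V_9=(1,-1)$, and verify that each claimed pair --- $(V_2,V_1)$, $(V_2,V_3)$, $(V_2,V_7)$, $(V_2,V_9)$, $(V_2,V_5)$, $(V_1,V_3)$, $(V_1,V_9)$ --- has coordinate determinant $\pm 1$. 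This simultaneously proves that each is a basis and that its height matrix is $\mathrm{GL}_2(\mathbb Z)$-equivalent to $Q$; being Gram matrices of the same lattice in different bases, the associated forms are all equivalent.

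I would then explain the role of the orders listed in the table via the exact sequence $0\to N/N_{\text{root}}\to W/N_{\text{root}}\to W/N\to 0$ with $W/N\simeq\mathbb Z/10\mathbb Z$. The image of $V_2$ generates the order-$5$ subgroup, so a pair $(V_2,V)$ can generate $W/N_{\text{root}}$ only if the image of $V$ has order $10$ or $2$; this is exactly the constraint singled out in the statement, and it is met by $V_1,V_3,V_7,V_9$ (order $10$) and $V_5$ (order $2$).

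Finally, for the index-$10$ sublattice I would identify $V_{11}=-5V_2$ and $V_{12}=2V_1-V_2$ as the images of the generators $a_1+2a_2$ and $z_6$ of $N/N_{\text{root}}$, check orthogonality by the direct computation $\langle V_{11},V_{12}\rangle=-10\langle V_1,V_2\rangle+5\langle V_2,V_2\rangle=-\frac{1}{2}+\frac{1}{2}=0$, and confirm generation by the index count: in the $(V_1,V_2)$-basis their coordinate determinant is $\left|\det\left(\begin{smallmatrix}0&-5\\2&-1\end{smallmatrix}\right)\right|=10=[W/N_{\text{root}}:N/N_{\text{root}}]$, so $V_{11},V_{12}$ already span $N/N_{\text{root}}$. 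The main obstacle is not any single computation but the passage from \emph{generating a finite-index sublattice} to \emph{generating the full lattice}: every basis claim rests on matching the global discriminant $3/20$ coming from (\ref{F:disc}), so the argument is only as solid as that discriminant computation, which I would therefore pin down first.
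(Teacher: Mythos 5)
Your proposal is correct and follows essentially the same route as the paper: fix the target discriminant $3/20$ via formula (\ref{F:disc}), verify that the height matrix of $(V_1,V_2)$ attains it, and then reduce all remaining basis and equivalence claims to unimodular base changes read off from the last column of Table \ref{table:height1}, with the sublattice claim settled by an index/determinant count (the paper checks $\det = 15 = 10^2\cdot\tfrac{3}{20}$ for the Gram matrix of $(V_{11},V_{12})$, which is equivalent to your coordinate-determinant $10$). Your write-up is more explicit than the paper's terse proof, but no new idea is involved.
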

\begin{proof}
We observe that the nine first sections are not in the same class modulo $N_{\text{root}}$.

The rational quadratic form $Q(x,y)$ associated to the height matrix of ($V_1$, $V_2$) is $Q(x,y)=\frac{1}{40}(61x^2+4xy+4y^2)$.


Other properties  are simple transcriptions of the base change which can be derived from the last column of Table \ref{table:height1}.
For example the rational quadratic form associated to the height matrix of ($V_1$, $V_9$) is equivalent to $Q(x,y)$ since 
\[
\begin{pmatrix}
1 & 0\\
1 & -1
\end{pmatrix}
\begin{pmatrix}
\frac{61}{40} & \frac{1}{20}\\
\frac{1}{20} & \frac{1}{10}
\end{pmatrix}
\begin{pmatrix}
1 & 1\\
0 & -1
\end{pmatrix}
=
\begin{pmatrix}
\frac{61}{40} & \frac{59}{40}\\
\frac{59}{40} & \frac{61}{40}
\end{pmatrix}
\]

Finally we verify that the height matrix of $(V_{11},V_{12})$, namely $\begin{pmatrix}
                                                                 \frac{5}{2} & 0 \\
                                                                  0 & 6
                                                                \end{pmatrix}$,     has  determinant $15=10^2 \frac{3}{20}.$
Moreover there exists a sublattice of index $2$ generated by $V_{2}$ and $V_{12}$ with
$<V_{2},V_{12}>=0.$

\end{proof}

\subsection{Second embedding in $D_6$}

We embed $A_1$ in $A_9^{(1)}$ by $\phi(A_1)=\langle a_1^{(1)} \rangle$ and $A_5$ in $D_6$ by $i_2(A_5)=(d_6,d_4,d_3,d_2,d_1)$. We obtain
\[N=\left ( \phi(A_1)\oplus i_1(A_5)\right ) ^{\perp_{L_{\text{root}}}}=[\langle a_1+2a_2,a_3, ...,a_9 \rangle, A_9,  \langle\tilde{ z_6}\rangle ] \]
with $\tilde{z_6}=d_1+2d_2+3d_3+4d_4+3d_5+2d_6$ and $\det(\langle a_1+2a_2,a_3, ...,a_9 \rangle)=2\times 10$; thus $\det(N)=2\times 10\times 10 \times 6$.
It follows $N_{\text{root}}=[\langle a_3,...,a_9\rangle, A_9, 0] \simeq A_7^{(1)}\oplus A_9^{(2)}$ and $W/N=\langle [2,4,0],[0,5,3] \rangle \simeq \mathbb Z/10\mathbb Z$. Since there is no integer $k$ satisfying $k( [2,4,0])\in N_{\text{root}}$ and no integer $k'$ with $k'([0,5,3])\in N_{\text{root}}$, we deduce that $\overline{W_{\text{root}}}/W_{\text{root}}=(0)$ so the corresponding elliptic fibration has trivial torsion and rank $2$.

\begin{table}
\begin{center}
\caption{Height and pairing-Second embedding} \label{table:height2}
\begin{tabular}
{lllllllll}%
&  & ${\scriptstyle I_{8}}$ & ${\scriptstyle I_{10}}$ & ${\scriptstyle <Z_{1},Z_{i}>}$ & ${\scriptstyle <Z_{2},Z_{i}>}$ & ${\scriptstyle ht\left(Z_{i}\right)}  $ & ${\scriptstyle o.}$ & \\
$Z_{1}$ & ${\scriptstyle O+3F+\left[  \alpha_{4},\alpha_{3},3\right]}  $ & ${\scriptstyle 2}$ & ${\scriptstyle 3}$ &
$\frac{12}{5}$ & $\frac{3}{10}$ & $\frac{12}{5}$ & ${\scriptstyle 10}$ & $Z_{1}$\\
$Z_{2}{\scriptstyle }$ & ${\scriptstyle O+2F+\left[  \alpha_{8},\alpha_{6},0\right]}  $ & ${\scriptstyle 6}$ & ${\scriptstyle 6}$ &
$\frac{3}{10}$ & $\frac{1}{10}$ & $\frac{1}{10}$ & ${\scriptstyle 5}$ & $Z_{2}$\\
$Z_{3}$ & ${\scriptstyle O+2F+\left[  \alpha_{2},\alpha_{9},3\right] } $ & ${\scriptstyle 0}$ & ${\scriptstyle 9}$ &
$\frac{27}{10}$ & $\frac{2}{5}$ & $\frac{31}{10}$ & ${\scriptstyle 10}$ & $Z_{1}+Z_{2}$\\
$Z_{4}{\scriptstyle }$ & ${\scriptstyle O+2F+\left[  \alpha_{6},\alpha_{2},0\right] } $ & ${\scriptstyle 4}$ & ${\scriptstyle 2}$ &
$\frac{3}{5}$ & $\frac{1}{5}$ & $\frac{2}{5}$ & ${\scriptstyle 5}$ & $2Z_{2}$\\
$Z_{5}$ & ${\scriptstyle O+2F+\left[  0,\alpha_{5},3\right] } $ & ${\scriptstyle 0}$ & ${\scriptstyle 5}$ & $\frac{3}{2}$ &
${\scriptstyle 0}$ & $\frac{3}{2}$ & ${\scriptstyle 2}$ & $Z_{1}-3Z_{2}$\\
$Z_{6}{\scriptstyle }$ & ${\scriptstyle O+2F+\left[  \alpha_{4},\alpha_{8},0\right] } $ & ${\scriptstyle 2}$ & ${\scriptstyle 8}$ &
$\frac{9}{10}$ & $\frac{3}{10}$ & $\frac{9}{10}$ & ${\scriptstyle 5}$ & $3Z_{2}$\\
$Z_{7}$ & ${\scriptstyle O+2F+\left[  \alpha_{8},\alpha_{1},3\right] } $ & ${\scriptstyle 6}$ & ${\scriptstyle 1}$ &
$\frac{9}{5}$ & $\frac{1}{10}$ & $\frac{8}{5}$ & ${\scriptstyle 10}$ & $Z_{1}-2Z_{2}$\\
$Z_{8}{\scriptstyle }$ & ${\scriptstyle O+2F+\left[  \alpha_{2},\alpha_{4},0\right] } $ & ${\scriptstyle 0}$ & ${\scriptstyle 4}$ &
$\frac{6}{5}$ & $\frac{4}{10}$ & $\frac{8}{5}$ & ${\scriptstyle 5}$ & $4Z_{2}$\\
$Z_{9}$ & ${\scriptstyle O+3F+\left[  \alpha_{6},\alpha_{7},3\right] } $ & ${\scriptstyle 4}$ & ${\scriptstyle 7}$ &
$\frac{21}{10}$ & $\frac{1}{5}$ & $\frac{19}{10}$ & ${\scriptstyle 10}$ & $Z_{1}-Z_{2}$\\
$Z_{11}$ & ${\scriptstyle O+2F+\left[-a_1-2a_2-a_{3},0,0\right] } $ & ${\scriptstyle 2}$ & ${\scriptstyle 0}$ & $\frac{-1}%
{4}$ & $\frac{-1}{2}$ & $\frac{5}{2}$ & ${\scriptstyle 0}$ & $-5Z_{2}$\\
$Z_{12}$ & ${\scriptstyle O+3F+\left[  0,0,\tilde{\delta}_{6}\right]  }$ & ${\scriptstyle 0}$ & ${\scriptstyle 0}$ & ${\scriptstyle 3}$ &
${\scriptstyle 0}$ & ${\scriptstyle 6}$ & ${\scriptstyle 0}$ & ${\scriptstyle 2Z_{1}-6Z_2}$\\
\end{tabular}
\end{center}
\end{table}

\begin{theo}
The Mordell-Weil lattice can be generated by the section $Z_{2}$ and another
section whose class in $W/N$ is of order $10$ or $2$ ($Z_{1},$ $Z_{3}%
,Z_{7},Z_{9}$ or $Z_{5}$). It also can be generated by $Z_1$ and $Z_3$ or $Z_9$. The rational quadratic forms associated to these various height matrices are all equivalent to the quadratic form $\frac{1}{10}(x^2+15y^2)$.

The sublattice of index $10$, $N/N_{root}$, is generated by $Z_{11}=-5Z_{2}$ and $Z_{12}.$

\end{theo}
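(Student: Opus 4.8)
The plan is to carry out, for the second embedding, the exact same bilinear bookkeeping that settled the first embedding. First I would record that the nine sections $Z_1,\dots,Z_9$ fall into nine pairwise distinct classes modulo $N_{\text{root}}$ — this is read off the last column of Table \ref{table:height2}, whose entries are incongruent — so together with the zero class they realize all of $W/N\simeq\mathbb Z/10\mathbb Z$. Then, reading the contribution columns of Table \ref{table:height2} together with the entries of the Gram matrix (\ref{matrix:1}) and applying Shioda's formulae (\ref{E:S1}) and (\ref{E:S2}), I would write down the height matrix of the pair $(Z_1,Z_2)$, namely
\[
\begin{pmatrix}\tfrac{12}{5} & \tfrac{3}{10}\\ \tfrac{3}{10} & \tfrac{1}{10}\end{pmatrix},
\]
whose determinant is $\tfrac{3}{20}$. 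Since $\tfrac{3}{20}$ is precisely the Mordell-Weil discriminant predicted by (\ref{F:disc}) (the trivial lattice $U\oplus A_7\oplus A_9$ has discriminant $-80$ and the torsion is trivial), this determinant check confirms that $(Z_1,Z_2)$ is an honest basis of the full lattice $\mathrm{MWL}(X)=W/N_{\text{root}}$, not merely a finite-index sublattice.

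Next I would identify the rational quadratic form $\tfrac{12}{5}x^2+\tfrac{3}{5}xy+\tfrac{1}{10}y^2$ with the target $\tfrac{1}{10}(x^2+15y^2)$. The cleanest route is through the orthogonal pair $(Z_2,Z_5)$: from Table \ref{table:height2} one has $Z_5=Z_1-3Z_2$, and a direct check gives $\langle Z_2,Z_5\rangle=0$, $h(Z_2)=\tfrac{1}{10}$, $h(Z_5)=\tfrac{3}{2}=\tfrac{15}{10}$, so in this basis the height matrix is diagonal and the form is literally $\tfrac{1}{10}(x^2+15y^2)$. Because the base change carrying $(Z_2,Z_5)$ to $(Z_1,Z_2)$ is
\[
\begin{pmatrix}3 & 1\\ 1 & 0\end{pmatrix}\in GL_2(\mathbb Z),
\]
the form attached to $(Z_1,Z_2)$ is integrally equivalent to the target. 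For every remaining generating pair claimed in the statement — $(Z_2,Z_j)$ with $Z_j$ of order $10$ or $2$, and $(Z_1,Z_3)$, $(Z_1,Z_9)$ — the last column of Table \ref{table:height2} supplies the integral change of basis to $(Z_1,Z_2)$ (for instance $Z_3=Z_1+Z_2$, $Z_9=Z_1-Z_2$), each of determinant $\pm1$; exactly as in the displayed $2\times2$ computation for the first embedding, this shows all these height matrices represent one and the same rational quadratic form, and that each such pair is again a basis.

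Finally, for the index-$10$ statement I would observe that $Z_{11}=-5Z_2$ and $Z_{12}=2Z_1-6Z_2=2Z_5$ both have order $0$ in $W/N$, hence lie in $N$ and generate $N/N_{\text{root}}$ inside $W/N_{\text{root}}$. In the orthogonal basis $(Z_2,Z_5)$ they are $-5Z_2$ and $2Z_5$, so a one-line bilinear computation gives $\langle Z_{11},Z_{12}\rangle=0$, $h(Z_{11})=\tfrac{5}{2}$, $h(Z_{12})=6$, and their Gram matrix $\mathrm{diag}(\tfrac{5}{2},6)$ has determinant $15=10^2\cdot\tfrac{3}{20}$, confirming the index $10$. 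The only step needing genuine care is establishing the integer coefficients recorded in the last column of Table \ref{table:height2}: each $Z_j$ is expressed as $aZ_1+bZ_2$ by solving the linear system $\langle Z_j,Z_k\rangle=a\langle Z_1,Z_k\rangle+b\langle Z_2,Z_k\rangle$ for $k=1,2$, exactly the method used for the first embedding. Everything else reduces to routine arithmetic in the entries of (\ref{matrix:1}).
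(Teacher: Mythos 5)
Your proposal is correct and follows essentially the same route as the paper, which simply declares this proof ``similar to the previous proof'' and relies on the same ingredients: distinctness of the classes of $Z_1,\dots,Z_9$ modulo $N_{\text{root}}$, the determinant check $\det\begin{pmatrix}\frac{12}{5}&\frac{3}{10}\\ \frac{3}{10}&\frac{1}{10}\end{pmatrix}=\frac{3}{20}$ against formula (\ref{F:disc}), unimodular base changes read off the last column of Table \ref{table:height2}, and the diagonal Gram matrix $\mathrm{diag}(\frac{5}{2},6)$ of $(Z_{11},Z_{12})$ with determinant $15=10^2\cdot\frac{3}{20}$. Your explicit use of the orthogonal pair $(Z_2,Z_5)$ to exhibit the form $\frac{1}{10}(x^2+15y^2)$ directly is a clean way to organize the equivalences and all your arithmetic checks out.
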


\begin{proof}
The proof is similar to the previous proof.

\end{proof}











\begin{cor}
The Mordell-Weil lattices for the first $i_1$ and second $i_2$ embeddings are not isomorphic. Thus they lead to two distinct elliptic fibrations.
\end{cor}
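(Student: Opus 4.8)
The plan is to show that the two Mordell--Weil lattices, regarded as positive-definite rank-two lattices, are not isometric; since the Mordell--Weil lattice is an invariant of an elliptic fibration up to isomorphism, this separates the two fibrations and, combined with Corollary~\ref{cor:1} (which already guarantees \emph{at most} two), pins down exactly the missing $53$th fibration. Concretely, by the two preceding theorems the Mordell--Weil lattice attached to $i_1$ is $(\mathbb Z^2,Q_1)$ with $Q_1(x,y)=\tfrac1{40}(61x^2+4xy+4y^2)$, while the one attached to $i_2$ is $(\mathbb Z^2,Q_2)$ with $Q_2(x,y)=\tfrac1{10}(x^2+15y^2)$.

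The difficulty, and the main obstacle, is that every cheap invariant agrees: both lattices have rank $2$, trivial torsion, the same trivial lattice $A_7\oplus A_9$, the same discriminant $\det Q_1=\det Q_2=\tfrac3{20}$ (as forced by~(\ref{F:disc})), and even the same minimal height $\tfrac1{10}$ (realised by $V_2$ and $Z_2$). A genuinely finer invariant is therefore required, and one must be careful not to infer isomorphism from the coincidence of the numerical data.

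The approach I would take is to regard an isometry of these lattices as an element $M\in GL_2(\mathbb Z)$ with $M^{t}A_1M=A_2$, where $A_i$ is the Gram matrix of $Q_i$; equivalently, writing $Q_2=\tfrac1{40}(4x^2+60y^2)$, to decide whether the integral binary quadratic forms $f_1=61x^2+4xy+4y^2$ and $f_2=4x^2+60y^2$, both of discriminant $-960$, are $GL_2(\mathbb Z)$-equivalent (the common scalar $\tfrac1{40}$ does not affect equivalence). The cleanest separating invariant is the set of heights actually represented: every value of $Q_2$ lies in $\tfrac1{10}\mathbb Z$ because $x^2+15y^2\in\mathbb Z$, whereas the section $V_1$ of the first fibration has height $\tfrac{61}{40}\notin\tfrac1{10}\mathbb Z$. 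An isometry would have to send $V_1$ to a vector of norm $\tfrac{61}{40}$ in the second lattice, and none exists; hence the two lattices are not isometric. The same conclusion follows from reduction theory: after exchanging $x$ and $y$, $f_1$ reduces to the reduced form $(4,4,61)$, while $f_2=(4,0,60)$ is already reduced, and two distinct reduced positive-definite forms of the same discriminant are inequivalent.

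Once this arithmetic obstruction is isolated, the argument is immediate: the height $\tfrac{61}{40}$ is read off Table~\ref{table:height1} and the shape $\tfrac1{10}(x^2+15y^2)$ off Table~\ref{table:height2}, so the Mordell--Weil lattices of the $i_1$- and $i_2$-embeddings are non-isometric, and therefore the two embeddings yield two genuinely distinct elliptic fibrations of $X$.
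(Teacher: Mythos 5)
Your proposal is correct and its core argument is the same as the paper's: the paper rules out an integral change of basis by observing that the $(1,1)$-entry equation $\frac{a^2}{10}+\frac{3}{2}b^2=\frac{61}{40}$, i.e.\ $4(a^2+15b^2)=61$, has no integer solution, which is exactly your observation that the second lattice represents only values in $\frac{1}{10}\mathbb Z$ while the first contains a vector of height $\frac{61}{40}$. Your additional check via reduced binary quadratic forms $(4,4,61)$ versus $(4,0,60)$ of discriminant $-960$ is a valid alternative confirmation, but the essential step coincides with the paper's.
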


\begin{proof}
According to the previous theorems, the Mordell-Weil lattice for the first (resp. second) embedding can be generated by the sections $V_1$ and $V_2$ (resp. $Z_2$ and $Z_5$) with height matrix $\begin{pmatrix}
\frac{61}{40}  &  \frac{1}{20} \\
\frac{1}{20}  &  \frac{1}{10}
\end{pmatrix}$
(resp.  $\begin{pmatrix}
\frac{1}{10}  &  0 \\
0  &  \frac{3}{2}
\end{pmatrix}$).

As we can prove easily that these two matrices are not equivalent, since there is no matrix $\begin{pmatrix}
                                                                                             a & b \\
                                                                                
                                                                                             c & d
                                                                                             \end{pmatrix}$  
with integer entries satisfying
\[
\begin{pmatrix}
a & b\\
c & d
\end{pmatrix}
\begin{pmatrix}
\frac{1}{10} & 0 \\
0 & \frac{3}{2}
\end{pmatrix}
\begin{pmatrix}
a & c\\
b & d
\end{pmatrix}
=
\begin{pmatrix}
\frac{a^2}{10}+\frac{3}{2}b^2 & \frac{ac}{10}+\frac{3}{2}bd\\
 \frac{ac}{10}+\frac{3}{2}bd & \frac{c^2}{10}+\frac{3}{2}d^2
\end{pmatrix}
=
\begin{pmatrix}
\frac{61}{40} & \frac{1}{20}\\
\frac{1}{20} & \frac{1}{10}
\end{pmatrix},
\]

for there are no integers $a$ and $b$ satisfying $4(a^2+15b^2)=61$.

\end{proof}

\newpage

\section{Weierstrass equations}
In this second part we obtain the Weierstrass equations of the unique, up to automorphism of the Niemeier lattice $N(D_6^4)$, elliptic fibration denoted $\#36$ as in \cite{BGL} and of the two elliptic fibrations, non isomorphic by an automorphism of the Niemeier lattice $N(A_9^2D_6)$, numbered $\#40$ as in \cite{BGL} and $\#40$ bis. These fibrations are given with their torsion and infinite sections and their Mordell-Weil lattices so we can easily see the parallelism between the theoretic results of the first part and the new ones coming from the Weierstrass equations. 
\subsection{Background and method}
\setlength\arraycolsep{2pt}
\renewcommand{\arraystretch}{1.4}

We start from fibration \#50 of (\cite{BGL}) with Weierstrass equation
\begin{equation}
E_{u}:y^{2}+(u^{2}+3)yx+(u^{2}-1)^{2}y=x^{3},\label{Eq:Eu}%
\end{equation}
which is the universal elliptic curve with torsion structure $\left(
\mathbb{Z}/2\mathbb{Z}\right)  ^{2}\times\mathbb{Z}/3\mathbb{Z.}$

The points $A_2=(-\frac{1}{4}(u^2-1)^2,0)$, $A_{22}=(-(u+1)^2,(u+1)^3) $ and  $A_{23}=(-(u-1)^2,(u-1)^3) $ are  2-torsion points and 
the point $P_3=(0,0)$ is a 3-torsion point.

The singular fibers are of type $I_6$ for $u=1,-1,\infty$ and $I_2$ for $3,-3,0$. 

The components of an $I_n$ fiber are numbered cyclically, $\Theta_{i,j}$ being the $j-$th component of the singular fiber above $u=i$ and the component $\Theta_{i,0}$
intersecting the zero section.
\subsection{The graph $\Gamma$}
The vertices of the graph $\Gamma$ are the twelve torsion sections and the $24$ components $\Theta_{i,j}$. Two vertices are linked by an edge if they intersect.  To make it easely lisible, only some parts of this graph are drawn on the following figures. 

Recall first that two torsion sections do not intersect.

Then, we compute for a set of generating sections, which component of singular fibers are intersected, derived for example from the method given in \cite {Cr} or in \cite {Si}.  For the other torsion sections, we use the algebraic structure of the N\'eron model or the height of sections as explained below.

Recall that the height of a torsion point $P$ is $0$  involving conditions on $contr_{v}\left(
P\right)  $ since from formula (\ref{E:S2}) and Table \ref{table:cont} it follows $4=\sum_{v}contr_{v}\left(
P\right)$. For example, the only possible sum of contributions for the $3$-torsion point $P_3$ is
$\frac{2\times4}{6}+\frac{2\times4}{6}+\frac{2\times4}{6}+0+0+0,$ and for a
two-torsion point $\frac{3\times3}{6}+\frac{3\times3}{6}+0+\frac{1}{2}%
+\frac{1}{2}+0$. Since the sum of two $2-$torsion points is also a $2-$torsion point,
only one $2-$torsion point intersects the component $\Theta_{i,0}$, for a
given reducible fiber. These remarks allow us  
to construct $\Gamma.$

Let us now summarize useful results. The point $P_{3}$ intersects the component
$\Theta_{i,2}$ (by convention $\Theta_{i,2}$ not $\Theta_{i,4}$ ) of the
$I_{6}$ fibers and the component $\Theta_{i,0}$ of the $I_{2}$ fibers. The
point $A_{2}$ intersects the components $\Theta_{\infty,0}$ and $\Theta
_{0,0},$ the point $A_{22}$ intersects the components $\Theta_{1,0}$ and
$\Theta_{-3,0}$ . These two points intersects $\Theta_{i,3}$ for the others
$I_{6}$ fibers  and $\Theta_{i,1}$ for the other $I_{2}$ fibers.

\subsection{Method for building elliptic fibrations from fibration \#50}
Recall that it is sufficient to identify a divisor $D$ on the surface that has the shape of a singular fiber from Kodaira's list and an irreducible curve $C$ with $C.D=1$ to find an elliptic fibration with $D$ as a singular fiber and $C$ as a section. The fibration is induced by the linear system $|D|$.

Moreover, if we can draw two divisors $D$ and $D'$ on the graph $\Gamma$ with $D.D'=0$ it is easier to determine a new fibration.
We must define a function, called elliptic parameter, with divisor $D'-D.$ Moreover if $D$ and $D'$ are subgraph of $\Gamma$ 
we use the elliptic curve  $E_u$. The method and computations are explicited for the fibration \#36.

\section{Fibration \#36}
\begin{figure}
\begin{center}
\begin{tikzpicture}[scale=1]
\unitlength 1cm
\draw [fill=green](0,0) circle  (0.07cm)node [above] {$0$};
\draw [dashed] (0,0) circle (0.15cm);
\draw [double,fill=black] (0,3) circle (0.07cm) node [above] {$P_3$};

\draw [double,fill=black](0,-7) circle (0.07cm) node [above] {$2P_3$};

\draw [fill=blue](0.5,5) circle (0.07cm) node [above] {$A_2$};

\draw [fill=pink] (0,-2)circle (0.07cm) node[left]{$\Theta_{\infty,0}$};
\draw [double,fill=black] (0,-5)circle (0.07cm) node[left]{$\Theta_{\infty,3}$};
\draw [double,fill=black] (1,-3)circle (0.07cm) node[left]{$\Theta_{\infty,5}$};
\draw [double,fill=black] (1,-4)circle (0.07cm) node[left]{$\Theta_{\infty,4}$};
\draw [double,fill=black] (-1,-4)circle (0.07cm) node[left]{$\Theta_{\infty,2}$};
\draw [double,fill=black] (-1,-3)circle (0.07cm) node[left]{$\Theta_{\infty,1}$};
\draw (0,-2)--(1,-3);
\draw (1,-3)--(1,-4);
\draw (1,-4)--(0,-5);
\draw  (0,-5)--(-1,-4);
\draw (-1,-4)--(-1,-3);
\draw (-1,-3)--(0,-2);
\draw [fill=green] (1.732,1)circle (0.07cm) node[left]{$\Theta_{-1,0}$};
\draw [dashed] (1.732,1) circle (0.15cm);
\draw [fill=blue] (4.33,2.5)circle (0.07cm) node[right]{$\Theta_{-1,3}$};
\draw [fill=green] (2.098,2.366)circle (0.07cm) node[left]{$\Theta_{-1,1}$};
\draw [dashed] (2.098,2.366) circle (0.15cm);
\draw [fill=red] (2.9641,2.866)circle (0.07cm) node[left]{$\Theta_{-1,2}$};
\draw [fill=red] (3.964,1.1339)circle (0.07cm) node[right]{$\Theta_{-1,4}$};
\draw [fill=green] (3.098,0.6339)circle (0.07cm) node[left]{$\Theta_{-1,5}$};
\draw [dashed] (3.098,0.6339) circle (0.15cm);
\draw (1.732,1)--(2.098,2.366);
\draw (2.098,2.366)--(2.9641,2.866);
\draw (2.9641,2.866)--(4.33,2.5);
\draw (4.33,2.5)--(3.964,1.1339);
\draw (3.964,1.1339)--(3.098,0.6339);
\draw (3.098,0.6339)--(1.732,1);

\draw [fill=green] (-1.732,1)circle (0.07cm) node[right]{$\Theta_{1,0}$};
\draw [dashed] (-1.732,1) circle (0.15cm);
\draw [fill=blue] (-4.33,2.5)circle (0.07cm) node[left]{$\Theta_{1,3}$};
\draw [fill=green] (-2.098,2.366)circle (0.07cm) node[right]{$\Theta_{1,1}$};
\draw [dashed] (-2.098,2.366) circle (0.15cm);
\draw [fill=red] (-2.9641,2.866)circle (0.07cm) node[left]{$\Theta_{1,2}$};
\draw [fill=red] (-3.964,1.1339)circle (0.07cm) node[left]{$\Theta_{1,4}$};
\draw [fill=green] (-3.098,0.6339)circle (0.07cm) node[left]{$\Theta_{1,5}$};
\draw [dashed] (-3.098,0.6339) circle (0.15cm);

\draw (-1.732,1)--(-2.098,2.366);
\draw (-2.098,2.366)--(-2.9641,2.866);
\draw (-2.9641,2.866)--(-4.33,2.5);
\draw (-4.33,2.5)--(-3.964,1.1339);
\draw (-3.964,1.1339)--(-3.098,0.6339);
\draw (-3.098,0.6339)--(-1.732,1);

\draw (0,0)--(0,-2);
\draw (0,0)--(1.732,1);
\draw (0,0)--(-1.732,1);

\draw (0,-7)--(1,-4);
\draw (0,-7)--(-3.964,1.1339);
\draw (0,-7)--(3.964,1.1339);

\draw (0,3)--(-1,-4);
\draw (0,3)--(-2.9641,2.866);
\draw (0,3)--(2.9641,2.866);

\draw (0.5,5)--(4.33,2.5);
\draw (0.5,5)--(-4.33,2.5);
\draw (0.5,5)--(0,-2);

\draw (-2.2,-1)--(-1.2,-1);
\draw (-2.2,-0.95)--(-1.2,-0.95);
\draw [fill=pink] (-2.2,-1)circle (0.07cm) node[above]{$\Theta_{-3,0}$};
\draw [fill=blue] (-1.2,-1)circle (0.07cm) node[above]{$\Theta_{-3,1}$};
\draw (-1.2,-1)--(0.5,5);

\draw (1.15,-0.7)--(2.15,-0.7);
\draw (1.15,-0.65)--(2.15,-0.65);
\draw [fill=pink] (1.15,-0.7)circle (0.07cm) node[above]{$\Theta_{3,0}$};
\draw [fill=blue] (2.15,-0.7)circle (0.07cm) node[above]{$\Theta_{3,1}$};
\draw (2.15,-0.7)--(0.5,5);

\end{tikzpicture}%
\caption{Fibration $\#36$}
\label{fig:Grw}
\end{center}
\end{figure}
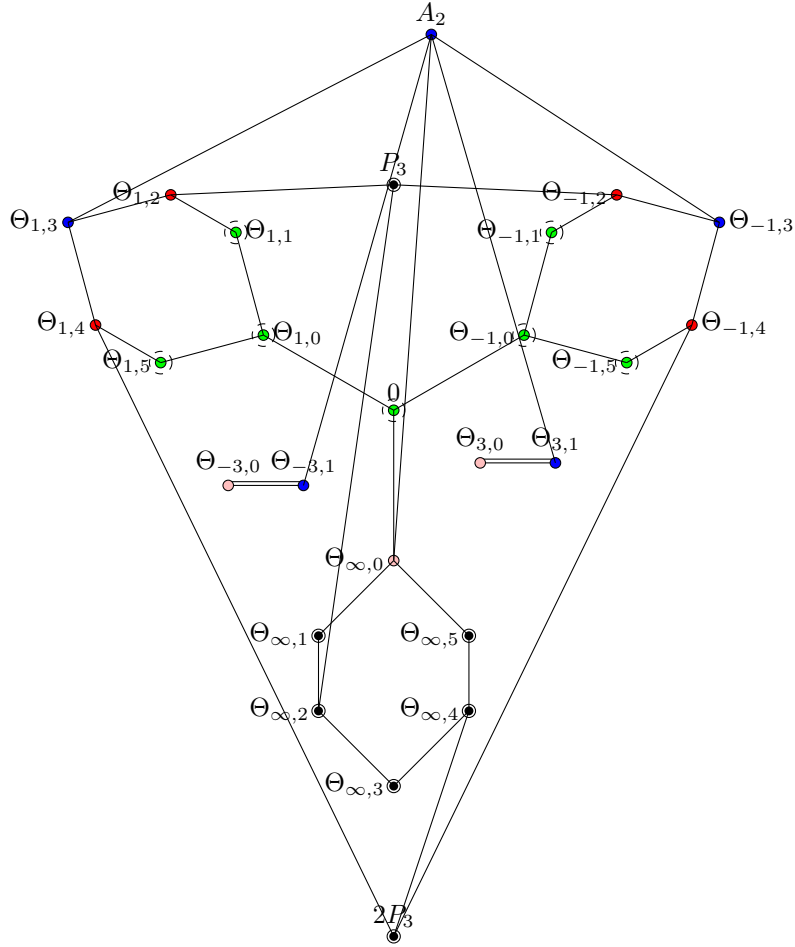

\subsection{Weierstrass equation}
We consider the divisors drawn in black (double circle) for $D'$ and green (dashed circle) for $D$ on the graph (Figure \ref{fig:Grw}) namely
\begin{align*}
D=\Theta_{-1,1}+2\Theta_{-1,0}+\Theta_{-1,5}+2(0)+2\Theta_{1,0}+\Theta_{1,1}+\Theta_{1,5}\\
D'=(P_3)+\Theta_{\infty,5}+2\Theta_{\infty,4}+2\Theta_{\infty,3}+2\Theta_{\infty,2}+\Theta_{\infty,1}+(2P_3).
\end{align*}

The divisors $D$ and $D'$ correspond to two singular fibers of type $I_2^*$ of the same fibration since $D.D'=0$.

We see also that $\Theta_{-1,3},\Theta_{1,3}$ and $A_2$ in blue are a part of another singular fiber.

Let $w$ be a parameter for the new fibration such that $w=\infty$ on $D$ and $0$ on $D'$.

So the divisors $D$ and $D^{\prime}$ correspond to the same element in the
N\'{e}ron-Severi group $NS(X)$. Let $D=\delta+\Delta$ and $D^{\prime}%
=\delta^{\prime}+\Delta^{\prime}$ where $\delta,\delta^{\prime}$ are sums
of sections, $\delta=2(O)$ and $\delta^{\prime}=(P_{3})+(2P_{3}),$ while $\Delta,$
$\Delta^{\prime}$ are sums of components of singular fibers. It follows from the equality $\delta=\delta^{\prime}$ in the group $NS(X)/T(X)$ that  $\delta
-\delta^{\prime}=2(0)-(P_3)-(2P_3)$ is the divisor of a function  on the elliptic curve
$E_{u},$ precisely the function
$x.$ The parameter $w$ is then equal to $x.f\left(  u\right)  .$ We compute
$f\left(  u\right)  $ using three blow- up  to get a pole of order $1$ on
$\Theta_{1,1},\Theta_{-1,5},\Theta_{\infty,1}$ and obtain
\[
w=\frac{x}{\left(  u^{2}-1\right)  ^{2}}.%
\]
Eliminating $x$ in the equation of $E_u$ and setting $y=(u^2-1)^2z,u=1+U$ it follows a quartic equation in $z,U,w$.
All the transformations are summarized in the birational transformation $\phi:(X,Y,w) \mapsto (x,y,u)$ leading to the following Weierstrass
equation $E_{w}$

\begin{equation}
E_{w}:Y^{2}=X\left(  X-w\left(  1+4w\right)  \right)  \left(  X+w^{2}\left(
1+4w\right)  \right)  \label{Eq:Ew}%
\end{equation}

with 
\begin{align*}
x  &  =\frac{w\left(  1+4w\right)  ^{2}\left(  X+4w^{3}\right)  ^{2}\left(
2Y+X\left(  2w+1\right)  \right)  ^{2}}{\left(  Y-Xw-2w^{3}\left(
1+4w\right)  \right)  ^{4}}\\
 y  &  =-\frac{\left(  1+4w\right)  ^{3}X\left(  X+4w^{3}\right)
^{4}\left(  2Y+X\left(  2w+1\right)  \right)  ^{2}}{\left(  Y-Xw-2w^{3}\left(
1+4w\right)  \right)  ^{6}}\\
u & =\frac{\left(1+4w\right)
\left(  X+4w^3\right)  }{Y-Xw-2w^{3}\left(  1+4w\right)  }+1.
\end{align*}%

The singular fibers are  of type $I_2^*$ for $w=0,\infty$, $I_0^*$ for $w=-1/4$,
$I_2$ for $w=-1.$

We compute that the function $w+1/4$ is equal to $0$ on $\Theta_{\pm 3,1}$, giving thus with $A_{2}$ and $\Theta_{\pm1,3}$ a complete description of the singular fiber $I_{0}^{\ast}.$ 

The component $\Theta_{0,1}$ is a component of the singular fiber $I_2$ obtained for $w=-1$ and does not intersect the new $0$ section. The second component is the curve with the parametrization

\[
u=-2\,{\frac {-3+{z}^{2}}{3+{z}^{2}}}
\]
\[
x=-9\,{\frac {\left (z-1\right )^{2}\left (3+z\right )^{2}\left (z-3
\right )^{2}\left (z+1\right )^{2}}{\left (3+{z}^{2}\right )^{4}}} \,
y=27\,{\frac {\left (3+z\right )^{2}\left (z-1\right )^{2}\left (z+1
\right )^{4}\left (z-3\right )^{4}}{\left (3+{z}^{2}\right )^{6}}}.
\] 

This component gives a quadratic section on $E_u$ and can be used to construct other fibrations.

\subsection {Sections of the fibration \#36}
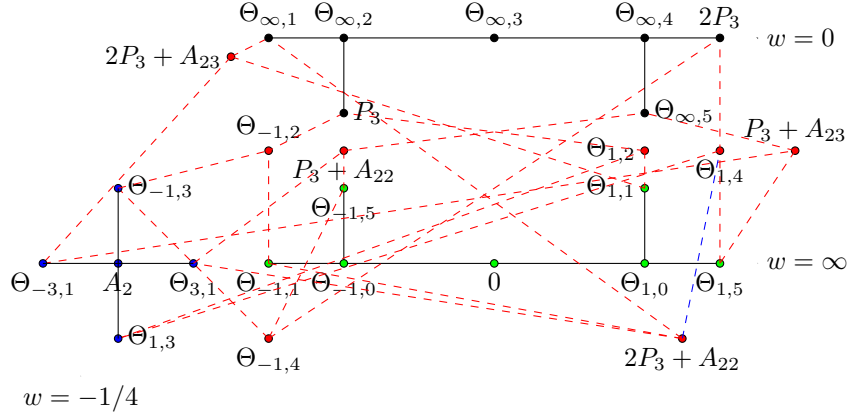
\begin{figure}
\begin{center}
\begin{tikzpicture}[scale=1]%
\draw (-3,-2)--(3,-2);
\draw (-2,-1)--(-2,-2);
\draw (2,-1)--(2,-2);
\draw [fill=green](-3,-2) circle (0.05cm) node [below]{$\Theta_{-1,1}$};
\draw [fill=green](-2,-2) circle (0.05cm) node [below]{$\Theta_{-1,0}$};
\draw [fill=green](0,-2) circle (0.05cm)node [below]{$0$};
\draw [fill=green](2,-2) circle (0.05cm)node [below]{$\Theta_{1,0}$};
\draw [fill=green](3,-2) circle (0.05cm)node [below]{$\Theta_{1,5}$};
\draw (3.5,-2) circle (0.0001cm) node [right] {$w=\infty$};
\draw [fill=green](-2,-1) circle (0.05cm)node [below]{$\Theta_{-1,5}$};
\draw [fill=red] (-3,-3) circle (0.05cm) node [below] {$\Theta_{-1,4}$};
\draw [fill=green](2,-1) circle (0.05cm)node [left]{$\Theta_{1,1}$};
\draw [fill=red](-3,-0.5) circle (0.05cm)node [above]{$\Theta_{-1,2}$};
\draw [fill=red](3,-0.5) circle (0.05cm)node [below]{$\Theta_{1,4}$};
\draw [fill=red](2,-0.5) circle (0.05cm)node [left]{$\Theta_{1,2}$};
\draw [fill=red](-2,-0.5) circle (0.05cm)node [below]{$P_3+A_{22}$};
\draw [fill=red](2.5,-3) circle (0.05cm) node [below] {$2P_3+A_{22}$};
\draw [fill=red](-3.5,0.75) circle (0.05cm)node [left]{$2P_3+A_{23}$};
\draw [fill=red](4,-0.5) circle (0.05cm) node [above]{$P_3+A_{23}$};

\draw [dashed,red] (-6,-2)--(4,-0.5);
\draw [dashed,red] (3,-0.5)--(-5,-3);
\draw [dashed,red] (-5,-3)--(2,-0.5);
\draw [dashed,red] (2,-0.5)--(-2,0);
\draw [dashed,red] (-2,0)--(-3,-0.5);
\draw [dashed,red] (-3.5,0.75)--(-6,-2);
\draw [dashed,red] (2.5,-3)--(-3,1);
\draw [dashed,blue] (2.5,-3)--(3,-0.5);
\draw [dashed,red] (2.5,-3)--(-4,-2);
\draw [dashed,red] (2.5,-3)--(-3,-2);
\draw [dashed,red] (-3,-3)--(-2,-1);
\draw [dashed,red] (-3,-3)--(-5,-1);
\draw [dashed,red] (-3,-3)--(3,1);
\draw [dashed,red] (-3,-2)--(-3,-0.5);
\draw [dashed,red](3,-2)--(3,1);
\draw [dashed,red] (-2,-0.5)--(-2,-1);
\draw [dashed,red] (-2,-0.5)--(2,0);
\draw [dashed,red] (2,-0.5)--(2,-1);
\draw [dashed,red] (-3.5,0.75)--(-3,1);
\draw [dashed,red] (-3.5,0.75)--(2,-1);
\draw [dashed,red] (4,-0.5)--(2,0);
\draw [dashed,red] (4,-0.5)--(3,-2);

\draw (-3,1)--(3,1);
\draw (-2,0)--(-2,1);
\draw (2,0)--(2,1);
\draw [fill=black](-3,1) circle (0.05cm)node [above]{$\Theta_{\infty,1}$};
\draw [fill=black](-2,1) circle (0.05cm)node [above]{$\Theta_{\infty,2}$};
\draw [fill=black](0,1) circle (0.05cm)node [above]{$\Theta_{\infty,3}$};
\draw [fill=black](2,1) circle (0.05cm)node [above]{$\Theta_{\infty,4}$};
\draw [fill=black](3,1) circle (0.05cm)node [above]{$2P_3$};
\draw (3.5,1) circle(0.00001cm) node [right] {$w=0$};
\draw [fill=black](-2,0) circle (0.05cm)node [right]{$P_3$};
\draw [fill=black](2,0) circle (0.05cm)node [right]{$\Theta_{\infty,5}$};

\draw [fill=blue](-4,-2) circle (0.05cm)node [below]{$\Theta_{3,1}$};
\draw [fill=blue](-5,-2) circle (0.05cm) node [below]{$A_2$};
\draw [fill=blue](-6,-2) circle (0.05cm)node [below]{$\Theta_{-3,1}$};
\draw [fill=blue](-5,-1) circle (0.05cm)node [right]{$\Theta_{-1,3}$};
\draw [fill=blue](-5,-3) circle (0.05cm)node [right]{$\Theta_{1,3}$};
\draw (-5.5,-3.5) circle (0.0001cm) node [below] {$w=-1/4$}; 

\draw [dashed,red] (-4,-2)--(-2,-0.5);
\draw [dashed,red] (-5,-1)--(-3,-0.5);
\draw (-4,-2)--(-6,-2);
\draw (-5,-3)--(-5,-1);
\end{tikzpicture}%
\caption{Fibration $36
$}
\label{fig:Grw1}
\end{center}
\end{figure}

Denote $Q_1=(0,0)$, $Q_2=(w(4w+1),0)$, $Q_3=(-w^2(4w+1),0)$ the two-torsion sections and $W_1=(-4w^3,-2w^3(2w+1))$ an infinite section of $E_u$.

On the Figures \ref{fig:Grw} and \ref{fig:Grw1}, in red bullets, can be viewed the following sections of the new fibration:
\[
\Theta_{1,2},\Theta_{1,4},\Theta_{-1,2},\Theta_{-1,4}
\]
and also 
\[
P_3+A_{23},2P_3+A_{23},P_3+A_{22},2P_3+A_{22}.
\]
The correspondence between sections of fibrations $\#50$ and $\#36$ can be settled by the transformation $\phi$. Recall that the components $\Theta_{i,j}$ are obtained by blowing up. For example the section $P_3=(x=0,y=0)$ intersects the component $\Theta_{1,2}$, so this component defined by $x=(u-1)^2x_2,y=(u-1)^2y_2$ satisfies $y_2=0$. It follows that the point $W_1$ corresponds to  $\Theta_{1,2}$ and 
 the $0$ section of the new fibration  to $\Theta_{1,4}.$ For all results see Table \ref{table:height36}.

\subsection{Heights of sections}

The heights of sections of the new fibration are computed with the help of the graph. For example, we can see on Figure \ref{fig:Grw1} that the section $2P_{3}+A_{22}$ intersects $\Theta_{1,4}$ (the zero section), $\Theta_{\infty,1}$ ($I_2^*$ for $w=0$), $\Theta_{-1,1}$ ($I_2^*$ for $w=\infty$), $\Theta_{-3,1}$ ($I_0^*$ for $w=-\frac{1}{4}$) and $\Theta_{0,1}$ ($I_2$ for $w=-1$). The respective contributions are then computed with Table \ref{table:cont} and from formula (\ref{E:S2}) it follows 

\[
h(2P_3+2A_{22})=4+2-(3/2+3/2+1+1/2)=3/2.
\]

Since the height of this section is equal to $\frac{3}{2}$, according to formula (\ref{F:disc}), it generates the Mordell-Weil lattice.
The results are summarized on Table \ref{table:height36}.

\begin{table}
\begin{center}
\caption{Heights for sections of fibration \#36} \label{table:height36}
\begin{tabular}
[c]{|c|cccccccc|}%
\hline
Contr. on & ${\scriptstyle \Theta_{1,4}}$ & ${\scriptstyle \Theta_{1,2}}$ & ${\scriptstyle \Theta_{-1,2}}$ & ${\scriptstyle \Theta
_{-1,4}}$ & ${\scriptstyle P_{3}+A_{22}}$ & ${\scriptstyle 2P_{3}+A_{22}}$ & ${\scriptstyle P_{3}+A_{23}}$ & ${\scriptstyle 2P_{3}+A_{23}}%
$\\
\hline
$I_{2}^{\ast}\, {\scriptstyle w=0}$ & $0$ & $\frac{3}{2}$ & $\frac{3}{2}$ & $0$ & $1$ &
$\frac{3}{2}$ & $1$ & $\frac{3}{2}$\\
$I_{2}^{\ast}\,{\scriptstyle w=\infty}$ & $0$ & $1$ & $\frac{3}{2}$ & $\frac{3}{2}$ &
$\frac{3}{2}$ & $\frac{3}{2}$ & $0$ & $1$\\
$I_{0}^{\ast}\,{\scriptstyle w=\frac{-1}{4}}$ & $0$ & $0$ & $1$ & $1$ & $1$ & $1$ & $1$ &
$1$\\
$I_{2}\, {\scriptstyle w=-1}$ & $0$ & $0$ & $0$ & $0$ & $\frac{1}{2}$ & $\frac{1}{2}$ &
$\frac{1}{2}$ & $\frac{1}{2}$\\
$height$ & $0$ & $\frac{3}{2}$ & $0$ & $\frac{3}{2}$ & $0$ & $\frac{3}{2}$ &
$\frac{3}{2}$ & $0$\\
& ${\scriptstyle 0}$ & ${\scriptstyle W_{1}}$ & ${\scriptstyle Q_{1}}$ & ${\scriptstyle Q_{1}+W_{1}}$ & ${\scriptstyle Q_{2}}$ & ${\scriptstyle Q_{2}+W_{1}}$ &
${\scriptstyle Q_{3}+W_{1}}$ & ${\scriptstyle Q_{3}}$\\
\hline%
\end{tabular}
\end{center}
\end{table}    

\section{Fibration $\#40$}

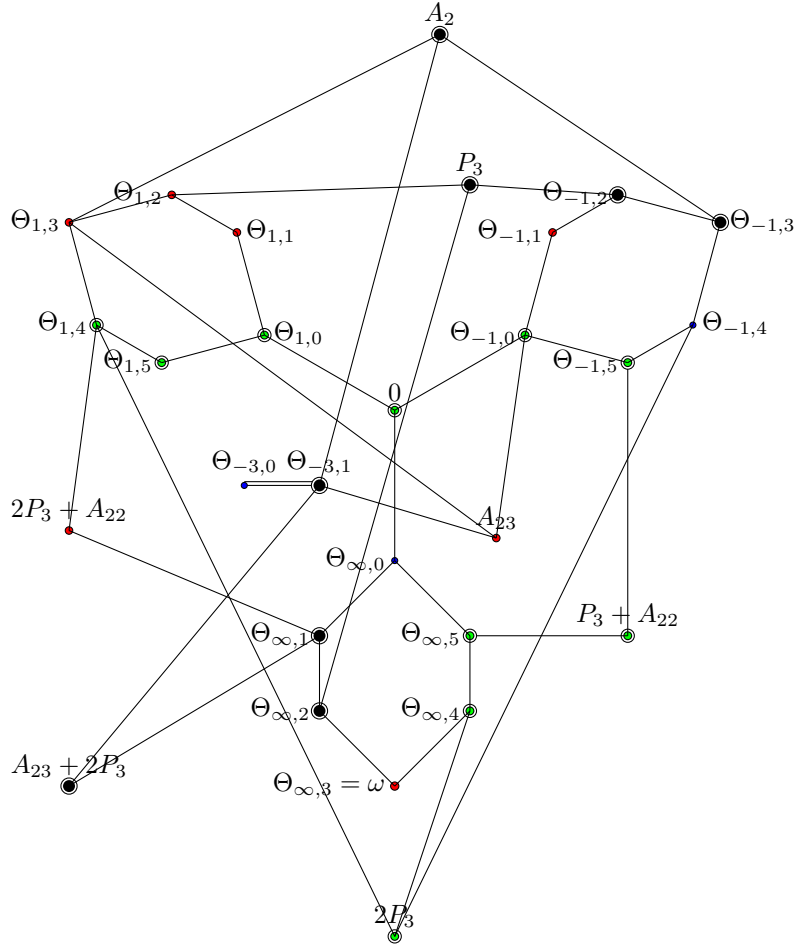
\begin{figure}
\begin{center}
\begin{tikzpicture}[scale=1]
\unitlength 1cm
\draw [double,fill=green](0,0) circle  (0.07cm)node [above] {$0$};

\draw [double,fill=black] (1,3) circle (0.09cm) node [above] {$P_3$};

\draw [double,fill=green](0,-7) circle (0.07cm) node [above] {$2P_3$};

\draw [double,fill=black](0.6,5) circle (0.09cm) node [above] {$A_2$};


\draw [double,fill=black] (-4.33,-5) circle (0.09cm) node [above] {$A_{23}+2P_3$};

\draw [double,fill=green] (3.098,-3) circle (0.07cm) node [above] {$P_3+A_{22}$};

\draw (3.098,-3)--(3.098,0.6339);
\draw (3.098,-3)--(1,-3);

\draw [fill=blue] (0,-2)circle (0.04cm) node[left]{$\Theta_{\infty,0}$};
\draw [fill=red] (0,-5)circle (0.055cm) node[left]{$\Theta_{\infty,3}=\omega$};
\draw [double,fill=green] (1,-3)circle (0.07cm) node[left]{$\Theta_{\infty,5}$};
\draw [double,fill=green] (1,-4)circle (0.07cm) node[left]{$\Theta_{\infty,4}$};
\draw [double,fill=black] (-1,-4)circle (0.09cm) node[left]{$\Theta_{\infty,2}$};
\draw [double,fill=black] (-1,-3)circle (0.09cm) node[left]{$\Theta_{\infty,1}$};
\draw (0,-2)--(1,-3);
\draw (1,-3)--(1,-4);
\draw (1,-4)--(0,-5);
\draw  (0,-5)--(-1,-4);
\draw (-1,-4)--(-1,-3);
\draw (-1,-3)--(0,-2);
\draw [double,fill=green] (1.732,1)circle (0.07cm) node[left]{$\Theta_{-1,0}$};
\draw [double,fill=black] (4.33,2.5)circle (0.09cm) node[right]{$\Theta_{-1,3}$};
\draw [fill=red] (2.098,2.366)circle (0.05cm) node[left]{$\Theta_{-1,1}$};
\draw [double,fill=black] (2.9641,2.866)circle (0.09cm) node[left]{$\Theta_{-1,2}$};
\draw [fill=blue] (3.964,1.1339)circle (0.04cm) node[right]{$\Theta_{-1,4}$};
\draw [double,fill=green] (3.098,0.6339)circle (0.07cm) node[left]{$\Theta_{-1,5}$};
\draw (1.732,1)--(2.098,2.366);
\draw (2.098,2.366)--(2.9641,2.866);
\draw (2.9641,2.866)--(4.33,2.5);
\draw (4.33,2.5)--(3.964,1.1339);
\draw (3.964,1.1339)--(3.098,0.6339);
\draw (3.098,0.6339)--(1.732,1);

\draw [double,fill=green] (-1.732,1)circle (0.07cm) node[right]{$\Theta_{1,0}$};
\draw [fill=red] (-4.33,2.5)circle (0.05cm) node[left]{$\Theta_{1,3}$};
\draw [fill=red] (-2.098,2.366)circle (0.05cm) node[right]{$\Theta_{1,1}$};
\draw [fill=red] (-2.9641,2.866)circle (0.05cm) node[left]{$\Theta_{1,2}$};
\draw [double,fill=green] (-3.964,1.1339)circle (0.07cm) node[left]{$\Theta_{1,4}$};
\draw [double,fill=green] (-3.098,0.6339)circle (0.07cm) node[left]{$\Theta_{1,5}$};

\draw (-1.732,1)--(-2.098,2.366);
\draw (-2.098,2.366)--(-2.9641,2.866);
\draw (-2.9641,2.866)--(-4.33,2.5);
\draw (-4.33,2.5)--(-3.964,1.1339);
\draw (-3.964,1.1339)--(-3.098,0.6339);
\draw (-3.098,0.6339)--(-1.732,1);

\draw (0,0)--(0,-2);
\draw (0,0)--(1.732,1);
\draw (0,0)--(-1.732,1);

\draw (0,-7)--(1,-4);
\draw (0,-7)--(-3.964,1.1339);
\draw (0,-7)--(3.964,1.1339);

\draw (1,3)--(-1,-4);
\draw (1,3)--(-2.9641,2.866);
\draw (1,3)--(2.9641,2.866);

\draw (0.6,5)--(4.33,2.5);
\draw (0.6,5)--(-4.33,2.5);



\draw (-2,-1)--(-1,-1);
\draw (-2,-0.95)--(-1,-0.95);
\draw [fill=blue] (-2,-1)circle (0.04cm) node[above]{$\Theta_{-3,0}$};
\draw [double,fill=black] (-1,-1)circle (0.09cm) node[above]{$\Theta_{-3,1}$};


\draw(-1,-1)--(-4.33,-5);
\draw(-4.33,-5)--(-1,-3);


\draw (0.6,5)--(-1,-1);

\draw [fill=red](1.35,-1.7) circle (0.05cm) node [above]{$A_{23}$};
\draw (1.35,-1.7)--(1.732,1);
\draw (1.35,-1.7)--(-1,-1);
\draw (1.35,-1.7)--(-4.33,2.5);

\draw [fill=red](-4.33,-1.6) circle (0.05cm) node [above]{$2P_3+A_{22}$};
\draw (-4.33,-1.6)--(-3.964,1.1339);
\draw (-4.33,-1.6)--(-1,-3);

\end{tikzpicture}%
\caption{Fibration $40$}
\label{fig:Grp}
\end{center}
\end{figure}

The two divisors
\begin{align*}
D  & =A_{2}+\Theta_{-1,3}+\Theta_{-1,2}+P_{3}+\Theta_{\infty,2}+\Theta
_{\infty,1}+\left(  A_{23}+2P_{3}\right)  +\Theta_{-3,1}\\
D^{\prime}  & =\Theta_{1,4}+\Theta_{1,5}+\Theta_{1,0}+0+\Theta_{-1,0}%
+\Theta_{-1,5}+\left(  P_{3}+A_{22}\right)  +\Theta_{\infty,5}+\Theta
_{\infty,4}+2P_{3}%
\end{align*}
can be viewed as two singular fibers of an elliptic fibration with
elliptic parameter $p$ determined as explained in 9.1.

First we search on $E_{u}$ a function $g$ with three simple poles at
$0,(2P_{3})$ and $P_{3}+A_{22}$ \ and three zeros at $P_{3},A_{2}$ and
$A_{23}+2P_{3}.$ Taking
\[
g=r+\frac{y-y_{P_{3}}}{x-x_{P_{3}}}+s\frac{y-y_{2P_{3}+A_{22}}}{x-x_{2P_{3}%
+A_{22}}}%
\]
and choosing $r$ and $s$ satisfying $g\left(  A_{2}\right)  =g\left(
A_{23}+2P_{3}\right)  =0$, we get $r=-u+1,s=-\frac{u-1}{u+1}.$ Finally to insure poles on $D^{\prime}$ set $p=\frac{g}{u-1}$ so
\[
p=\frac{\left(  2x+\left(  u^{2}-1\right)  ^{2}\right)  y-\left(
u^{2}-1\right)  x^{2}}{\left(  u^{2}-1\right)  x\left(  x+\left(  u+1\right)
\left(  u-1\right)  ^{2}\right)  }.
\]

We can remark that $p$ can also be obtained from the fibration \#36 and the
parameter

\[
p=\frac{-Y}{w\left(  X-w\left(  1+4w\right)  \right)  }.%
\]

The usual transformations leading to a Weierstrass equation are summarized in the birational map $\phi:(\left(  \mathfrak{x,y,}p\right)\mapsto \left(  x,y,u\right) $
with%

\begin{align*}
x  &  =\frac{-G_{3}^{2}G_{4}^{2}}{p^{2}\mathfrak{x}\left(  \mathfrak{x}^{2}-p\left(
p^{2}+4p-1\right)  \mathfrak{x}-4p^{3}\right)  ^{4}}\quad y=\frac{-G_{2}G_{4}%
^{2}G_{3}^{3}}{p^{2}\mathfrak{x}\left(  \mathfrak{x}^{2}-p\left(  p^{2}+4p-1\right)
\mathfrak{x}-4p^{3}\right)  ^{6}}\\
u  &  =\frac{-G_{1}}{p\left(  \mathfrak{x}^{2}-p\left(  p^{2}+4p-1\right)
\mathfrak{x}-4p^{3}\right)  }%
\end{align*}

where%

\begin{align*}
G_{1}  =\left(  \mathfrak{x}-2p^{2}\right)  \mathfrak{y}+p^{2}\left(
p^{2}+1\right)  \mathfrak{x}-4p^{4}, &  G_{2}=\left(  p+1\right)  \mathfrak{y}%
+\mathfrak{x}^{2}-p^2\left(  p+3\right)  \mathfrak{x},\\
G_{3}  =\left(  2p^{2}-\mathfrak{x}\right)  \mathfrak{y}-p\mathfrak{x}^{2}%
+2p^{2}\left(  2p-1\right)  \mathfrak{x}+8p^{4}, & G_{4}=\left(  2p^{2}%
-\mathfrak{x}\right)  \mathfrak{y}+p\mathfrak{x}^{2}-2p^{3}\left(  p+2\right)  \mathfrak{x}.%
\end{align*}

We find then the following Weierstrass equation
\begin{equation}
\mathfrak{y}^{2}-\left(  p^{2}+1\right)  \mathfrak{yx}+4p^{2}\mathfrak{y}=\mathfrak{x}\left(
\mathfrak{x}-p^{2}\right)  \left(  \mathfrak{x}-4p^{2}\right).  \label{Eq:Ep}%
\end{equation}

We denote $V_{1}=\left(  2p\left(  p-1\right)  ,2p\left(  p-1\right)  \right)
$ and $V_{2}=\left(  0,-4p^{2}\right).$

The first and last line of Table \ref{table:height40} are computed using $\phi$ and also $\omega=\Theta_{\infty,3}$  the zero of the new fibration. 

From the graph (Figure \ref{fig:Grp}) we obtain the index of the component of the singular fibers ($I_8$ and $I_{10}$) which a given section $S$ meets (line 2 and 3 of  Table \ref{table:height40}). 
Then we compute the heights as explained in 9.3. From formula (\ref{E:S1}), it follows 
$<\Theta_{1,1},2P_{3}+A_{22}>=\frac{1}{20}$. Thus the height matrix of $\Theta_{1,1}$ and $2P_{3}+A_{22}$ has determinant $3/20.$; we recover the result:

{\it{ The two sections $V_1$ and $V_2$ generate the Mordell-Weil lattice}}.





\begin{table}[b]
\begin{center}
\caption{Heights for sections of fibration \#40} \label{table:height40}
\begin{tabular}
[c]{|c|cccccccccc|}%
\hline
\text{sect.}  & $ {\scriptstyle \Theta_{1,1}} $ & ${\scriptstyle \Theta_{1,2}}$ &
${\scriptstyle \Theta_{1,3}}$ & ${\scriptstyle \Theta_{-1,1}}$ & ${\scriptstyle  \Theta_{3,1}}$ & $ {\scriptstyle \Theta_{0,1}}$ & ${\scriptstyle A_{22}}$ & ${\scriptstyle A_{23}}$ & ${\scriptstyle 2P_{3}+A_{22}}$ & ${\scriptstyle 2P_{3}+A_{2}}$\\
\hline
$I_{8}$ & 2 & 7 & 4 & 6 & 4 & 2& 5 & 3 & 1 & 3\\
$I_{10}$ & 4 & 8 & 2 & 6 & 8 & 8& 4 & 6 & 2 & 0\\
${\scriptstyle ht}$ & $\frac{1}{10}$ & $\frac{61}{40}$ & $\frac{4}{10}$ & $\frac{1}{10}$ & $\frac%
{4}{10}$ &$ \frac{9}{10}$& $\frac{69}{40}$ & $\frac{69}{40}$ & $\frac{61}{40}$ & $\frac{17}{8}$\\
& ${\scriptstyle V_{2}}$ & ${\scriptstyle V_{1}-V_{2}}$ & ${\scriptstyle -2V_{2}}$ & ${\scriptstyle -V_{2}}$ & ${\scriptstyle 2V_{2}}$ & ${\scriptstyle -3V_{2}}$& ${\scriptstyle V_{1}-2V_{2}}$ & ${\scriptstyle V_{1}+V_{2}}$ & ${\scriptstyle V_{1}}$ & ${\scriptstyle V_{1}-3V_{2}}$\\%
\hline
\end{tabular}
\end{center}
\end{table}

\section{Fibration $\#40$ bis}
\begin{figure}
\begin{center}
\begin{tikzpicture}[scale=1]
\unitlength 1cm
\draw [fill=green](0,0) circle  (0.07cm)node [above] {$0$};
\draw (0,0) circle (0.15cm);
\draw [fill=red] (0,3) circle (0.05cm) node [above] {$P_3$};

\draw [fill=red](0,-7) circle (0.05cm) node [above] {$2P_3$};

\draw [fill=green](0.6,5) circle (0.07cm) node [above] {$A_2$};
\draw (0.6,5) circle(0.15cm);

\draw [double,fill=black] (-4.33,-5) circle (0.07cm) node [above] {$A_{23}+2P_3$};

\draw [double,fill=black] (3.098,-3) circle (0.07cm) node [above] {$P_3+A_{22}$};

\draw (3.098,-3)--(3.098,0.6339);
\draw (3.098,-3)--(1,-3);

\draw [fill=blue] (0,-2)circle (0.05cm) node[left]{$\Theta_{\infty,0}$};
\draw [double,fill=black] (0,-5)circle (0.07cm) node[left]{$\Theta_{\infty,3}$};
\draw [double,fill=black] (1,-3)circle (0.07cm) node[left]{$\Theta_{\infty,5}$};
\draw [double,fill=black] (1,-4)circle (0.07cm) node[left]{$\Theta_{\infty,4}$};
\draw [double,fill=black] (-1,-4)circle (0.07cm) node[left]{$\Theta_{\infty,2}$};
\draw [double,fill=black] (-1,-3)circle (0.07cm) node[left]{$\Theta_{\infty,1}$};
\draw (0,-2)--(1,-3);
\draw (1,-3)--(1,-4);
\draw (1,-4)--(0,-5);
\draw  (0,-5)--(-1,-4);
\draw (-1,-4)--(-1,-3);
\draw (-1,-3)--(0,-2);
\draw [fill=green] (1.732,1)circle (0.07cm) node[left]{$\Theta_{-1,0}$};
\draw (1.732,1) circle(0.15cm);
\draw [fill=green] (4.33,2.5)circle (0.07cm) node[right]{$\Theta_{-1,3}$};
\draw (4.33,2.5) circle(0.15cm);
\draw [fill=green] (2.098,2.366)circle (0.07cm) node[left]{$\Theta_{-1,1}$};
\draw (2.098,2.366) circle(0.15cm);
\draw [fill=green] (2.9641,2.866)circle (0.07cm) node[left]{$\Theta_{-1,2}$};
\draw (2.9641,2.866) circle(0.15cm);
\draw [fill=red] (3.964,1.1339)circle (0.05cm) node[right]{$\Theta_{-1,4}$};
\draw [fill=red] (3.098,0.6339)circle (0.05cm) node[left]{$\Theta_{-1,5}$};
\draw (1.732,1)--(2.098,2.366);
\draw (2.098,2.366)--(2.9641,2.866);
\draw (2.9641,2.866)--(4.33,2.5);
\draw (4.33,2.5)--(3.964,1.1339);
\draw (3.964,1.1339)--(3.098,0.6339);
\draw (3.098,0.6339)--(1.732,1);

\draw [fill=green] (-1.732,1)circle (0.07cm) node[right]{$\Theta_{1,0}$};
\draw (-1.732,1) circle(0.15cm);
\draw [fill=green] (-4.33,2.5)circle (0.07cm) node[left]{$\Theta_{1,3}$};
\draw (-4.33,2.5) circle(0.15cm);
\draw [fill=red] (-2.098,2.366)circle (0.05cm) node[right]{$\Theta_{1,1}=\omega$};
\draw [fill=red] (-2.9641,2.866)circle (0.05cm) node[left]{$\Theta_{1,2}$};
\draw [fill=green] (-3.964,1.1339)circle (0.07cm) node[left]{$\Theta_{1,4}$};
\draw (-3.964,1.1339) circle(0.15cm);
\draw [fill=green] (-3.098,0.6339)circle (0.07cm) node[left]{$\Theta_{1,5}$};
\draw (-3.098,0.6339) circle(0.15cm);

\draw (-2.098,2.366)--(-4.33,-5);
\draw (-1.732,1)--(-2.098,2.366);
\draw (-2.098,2.366)--(-2.9641,2.866);
\draw (-2.9641,2.866)--(-4.33,2.5);
\draw (-4.33,2.5)--(-3.964,1.1339);
\draw (-3.964,1.1339)--(-3.098,0.6339);
\draw (-3.098,0.6339)--(-1.732,1);

\draw (0,0)--(0,-2);
\draw (0,0)--(1.732,1);
\draw (0,0)--(-1.732,1);

\draw (0,-7)--(1,-4);
\draw (0,-7)--(-3.964,1.1339);
\draw (0,-7)--(3.964,1.1339);

\draw (0,3)--(-1,-4);
\draw (0,3)--(-2.9641,2.866);
\draw (0,3)--(2.9641,2.866);

\draw (0.6,5)--(4.33,2.5);
\draw (0.6,5)--(-4.33,2.5);
\draw (0.6,5)--(0,-2);


\draw (1.35,-1.7)--(2.35,-1.7);
\draw (1.35,-1.65)--(2.35,-1.65);
\draw [fill=blue] (1.35,-1.7)circle (0.07cm) node[above]{$\Theta_{0,0}$};
\draw [double,fill=black] (2.35,-1.7)circle (0.07cm) node[above]{$\Theta_{0,1}$};

\draw [fill=red] (-1.8,-1)circle (0.05cm) node [above]{$P_3+A_2$};
\draw (-1.8,-1)--(-3.098,0.6339);
\draw (-1.8,-1)--(3.098,0.6339);
\draw (-1.8,-1)--(-1,-4);

\draw (1.35,-0.7)--(2.35,-0.7);
\draw (1.35,-0.65)--(2.35,-0.65);
\draw [fill=red] (1.35,-0.7)circle (0.05cm) node[above]{$\Theta_{3,0}$};
\draw [fill=red] (2.35,-0.7)circle (0.05cm) node[above]{$\Theta_{3,1}$};

\draw(-4.33,-5)--(-1,-3);

\draw (2.35,-0.7)--(3.098,-3);
\draw (2.35,-1.7)--(3.098,-3);

\draw (-4.33,-5)--(2.35,-1.7);

\end{tikzpicture}%
\caption{Fibration $40$ bis}
\label{fig:Grt}
\end{center}
\end{figure}
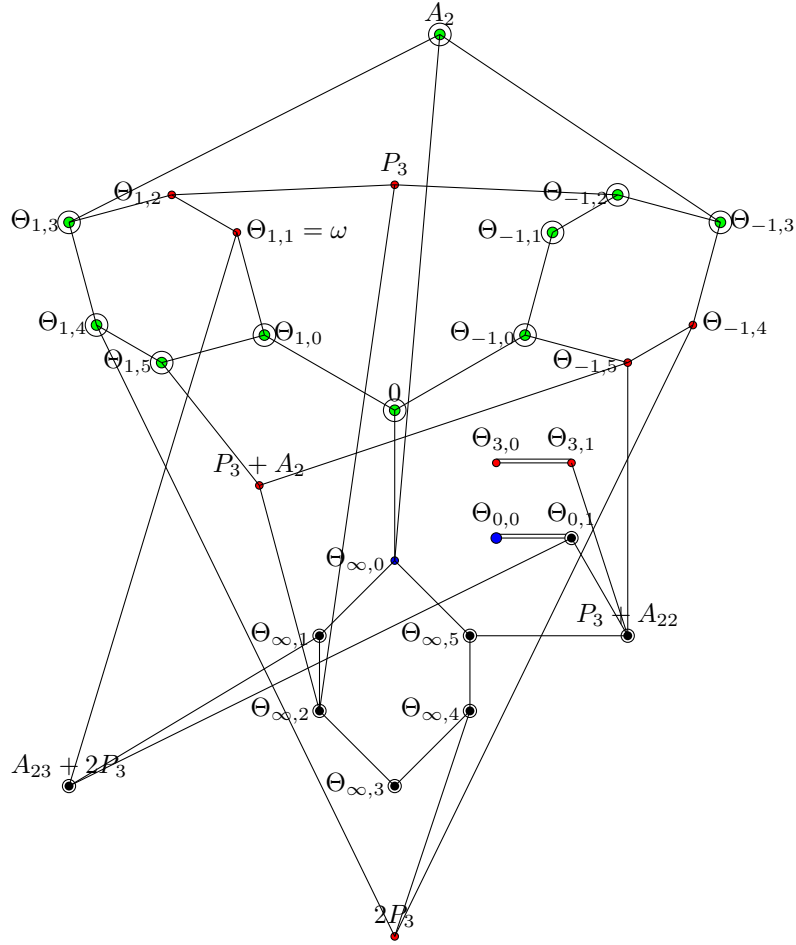

The two divisors 
\begin{align*}
D=\Theta_{\infty,1}+\Theta_{\infty,2}+\Theta_{\infty,3}+\Theta_{\infty,4}+\Theta_{\infty,5}+(P_3+A_{22})+\Theta_{0,1}
+(A_{23}+2P_3)\\
D'=\Theta_{-1,2}+\Theta_{-1,3}+A_2+\Theta_{1,3}+\Theta_{1,4}+\Theta_{1,5}+\Theta_{1,0}+0+\Theta_{-1,0}+\Theta_{-1,1}
\end{align*}
define two singular fibers of an elliptic fibration with elliptic parameter
\[
t=\frac{1}{2}\frac{2y+\left(  u-3\right)  \left(  u-1\right)  x-\left(
u-1\right)  ^{3}\left(  u+1\right)  ^{2}}{\left(  u^{2}-1\right)  \left(
x+\frac{1}{4}\left(  u^{2}-1\right)  ^{2}\right)  },
\]
and Weierstrass equation
\[
\mathfrak{y}^{2}+2\left(  t^{2}-1\right)  \mathfrak{yx}-2t^{2}\mathfrak
{y}=\mathfrak{x}\left(  \mathfrak{x}+t^{2}\right)  \left(  \mathfrak{x}%
+4t^{2}\right)
\]

with the following birational transformations%

\[
\left(  \mathfrak{x},\mathfrak{y},t\right) \mapsto \left(  x,y,u\right)  
\]%
\[
x=-8\frac{\mathfrak{y}(\mathfrak{x}+1)^{2}(\mathfrak{x}+4t^{2})H_{1}^{2}%
}{((2t+1)\mathfrak{y}+(\mathfrak{x}+4t^{2})(\mathfrak{x}-t))^{4}}%
\]%
\[
y=16\frac{(\mathfrak{x}+1)^{2}(\mathfrak{x}+4t^{2})(2\mathfrak{y}%
+4t^{2}\mathfrak{x}+\mathfrak{x}^{2})H_1^{4}}{((2t+1)\mathfrak{y}+(\mathfrak
{x}+4t^{2})(\mathfrak{x}-t))^{6}}%
\]%
\[
u=-\frac{(2t+1)\mathfrak{y}-(\mathfrak{x}+4t^{2})(\mathfrak{x}+2+t)}%
{(2t+1)\mathfrak{y}+(\mathfrak{x}+4t^{2})(\mathfrak{x}-t)}%
\]
where
\[
H_{1}=-(2t+1)\mathfrak{y}+(t+1)(\mathfrak{x}+4t^{2}).
\]

Notice also the relations
\[u-1=\frac{2H_1}{(2t+1)\mathfrak{y}+(\mathfrak{x}+4t^2)(\mathfrak{x}-t)}\]
and
\[u+1=\frac{2(\mathfrak{x}+1)(\mathfrak{x}+4t^2)}{(2t+1)\mathfrak{y}+(\mathfrak{x}+4t^2)(\mathfrak{x}-t)}.\]

Let $Z_{1}=\left(  0,0\right)  $ and $Z_{5}=\left(  -1,\left(  2t-1\right)
\left(  t+1\right)  \right)  $.

It follows from the previous formulae that the $0$ section of the new fibration
corresponds to $u=1$ and looking at $x/(u-1)$ and at the graph we find that the
$0$ section corresponds to $\Theta_{1,1}.$ The correspondence between the sections of the two fibrations can be also derived and is shown on Table \ref{table:height40bis}. On the same table are quoted the contributions and the heights of sections computed with the graph.
 
Moreover we find $<\Theta_{-1,4},P_{3}>=2+\Theta_{1,1}\cdot P_{3}+\Theta
_{1,1}\cdot\Theta_{-1,4}-\Theta_{-1,4}\cdot P_{3}-\frac{5\times4}{10}=0.$

Thus the height matrix of the two sections $\Theta_{-1,4}$ and $P_{3}$ is diagonal
with determinant $\frac{3}{20},$ so $Z_1$ and $Z_5$ generate the Mordell-Weil lattice.

\begin{table}
\begin{center}
\caption{Heights for sections of fibration \#40 bis} \label{table:height40bis}
\begin{tabular}
[c]{|c|cccccccccc|}%
\hline
\text{sect} & ${\scriptstyle \Theta_{1,1}}$ &$ {\scriptstyle \Theta_{1,2}}$ & ${\scriptstyle \Theta_{-1,4}}$ & ${\scriptstyle \Theta_{-1,5}}$ &
${\scriptstyle \Theta_{3,0}}$ & ${\scriptstyle \Theta_{3,1}}$ & ${\scriptstyle P_{3}}$ &${\scriptstyle  2P_{3} }$& ${\scriptstyle P_{3}+A_{2}}$ & ${\scriptstyle 2P_{3}+A_{2}}$\\
\hline
$I_{8}$ & $0 $& $2$ &$ 0 $& $2$ & $0$ & $2 $& $6$ &$ 4 $& $6$ & $4$\\
$I_{10}$ & $0 $&$ 7 $&$ 5$ & $2 $& $1$ & $6$ &$ 4 $&$ 8 $&$ 9 $&$ 3$\\
$ht $&$ 0 $& $\frac{12}{5} $& $\frac{3}{2}$ &$ \frac{9}{10}$ & $\frac{31}{10}$ & $\frac
{1}{10}$ & $\frac{1}{10}$ &$ \frac{4}{10} $&$ \frac{8}{5} $& $\frac{19}{10}$\\
&$ {\scriptstyle 0}$ & ${\scriptstyle 3Z_{1}-Z_{5}}$ & ${\scriptstyle Z_{5}} $&$ {\scriptstyle 3Z_{1}}$ & ${\scriptstyle 4Z_{1}-Z_{5}}$ &${\scriptstyle  -Z_{1}} $&${\scriptstyle  Z_{1}} $&$ {\scriptstyle 2Z_{1}}$ &$
{\scriptstyle Z_{1}-Z_{5}}$ &${\scriptstyle  2Z_{1}-Z_{5}}$\\
\hline
\end{tabular}
\end{center}
\end{table}

\end{document}